\newcommand{\field}{\mathcal{N}}
\newcommand{\setR}{\ensuremath{\mathbb{R}}}
\newtheorem{theorem}{Theorem}
\theoremstyle{definition}
\newtheorem{defn}{Definition}
\newtheorem{notation}{Notation}
\newcommand{\abs}[1]{\left\lvert#1\right\rvert}
\begin{document}
\title[Analyticity of WLUD$^\infty$ functions on $\mathcal{N}$ and $\mathcal{N}^n$]{On the analyticity of WLUD$^\infty$ functions of one variable and WLUD$^\infty$ functions of several variables in a complete non-Archimedean valued field}
\author{Khodr Shamseddine}
\address{Department of Physics and Astronomy, University of Manitoba, Winnipeg, Manitoba
R3T 2N2, Canada}
\email{khodr.shamseddine@umanitoba.ca}
\thanks{This research was funded by the Natural Sciences and Engineering Council of Canada (NSERC, Grant \# RGPIN/4965-2017)}
\subjclass[2020]{41A58, 32P05, 12J25, 26E20, 46S10}
\keywords{Taylor Series Expansion; Analytic Functions; non-Archimedean Analysis; non-Archimedean Valued Fields}
\begin{abstract} Let $\mathcal{N}$ be a non-Archimedean ordered field extension of the real numbers that is real closed and Cauchy complete in the topology
induced by the order, and whose Hahn group is Archimedean. In this paper, we first review the properties of weakly locally uniformly differentiable
(WLUD) functions, $k$ times weakly locally uniformly differentiable (WLUD$^k$) functions, and WLUD$^\infty$ functions \cite{boo-sham-18,MultivWLUD} at a point or on an open subset of $\mathcal{N}$. Then we show under what conditions a WLUD$^\infty$ function at a point $x_0\in\mathcal{N}$ is analytic in an interval around $x_0$, that is, it has a convergent Taylor series at any point in that interval.

We generalize the concepts of WLUD$^k$ and WLUD$^\infty$ to functions from $\mathcal{N}^n$ to $\mathcal{N}$, for any
$n\in\mathbb{N}$. Then we formulate conditions under which a WLUD$^\infty$ function at a point $\boldsymbol{x_0} \in \mathcal{N}^n$ is analytic at that point.
\end{abstract}
\maketitle

\section{Introduction} \label{seintro}

Let $\mathcal{N}$ be a non-Archimedean ordered field extension of $\ensuremath{\mathbb{R}}$ that is real closed and
complete in the order topology and whose Hahn group $S_\mathcal{N}$ is Archimedean, i.e. (isomorphic to) a subgroup of $\ensuremath{\mathbb{R}}$. Recall that $S_{\mathcal{N}}$ is the set of equivalence classes under the relation $\sim$ defined on $\mathcal{N}^*:=\mathcal{N}\setminus\{0\}$ as follows: For $x,y\in \mathcal{N}^*$, we say that $x$ is of the same order as $y$ and write $x\sim y$ if there exist
$n,m\in\ensuremath{\mathbb{N}}$
such that $n|x|>|y|$ and $m|y|>|x|$, where $|\cdot|$ denotes the ordinary absolute value on $\mathcal{N}$:
$ |x|=\max\left\{x,
-x\right \}$.
$S_{\mathcal{N}}$ is naturally endowed with an addition via $[x]+[y]=[x\cdot y]$ and an order via
$[x]<[y]$ if $|y|\ll|x|$ (which means $n|y|< |x|$ for all $n\in\mathbb{N}$), both of which are readily checked to be well-defined.
It follows that $(S_{\mathcal{N}},+,<)$ is an ordered group,
often referred to as the Hahn group or skeleton group, whose neutral element is $[1]$, the class of $1$.

The theorem of Hahn \cite{hahn} provides a complete classification
of non-Archimedean ordered field extensions of $\ensuremath{\mathbb{R}}$ in terms
of their skeleton groups. In fact, invoking the axiom of choice it
is shown that the elements of our field $\mathcal{N}$ can be written as
(generalized) formal power series (also called Hahn series) over its skeleton group $S_{\mathcal{N}}$ with real
coefficients, and the set of appearing exponents forms a
well-ordered subset of $S_{\mathcal{N}}$. That is, for all $x\in \mathcal{N}$, we have that
$x=\sum_{q\in S_{\mathcal{N}}}a_qd^{q}$;
with $a_q\in\setR$ for all $q$, $d$ a positive infinitely small element of $\mathcal{N}$, and the support of $x$, given by
$\mbox{supp}(x):=\{q\in S_\mathcal{N}: a_q\ne 0\}$,
forming a well-ordered subset of $S_{\mathcal{N}}$.

 We define for $x\ne 0$ in $\mathcal{N}$,
$ \lambda(x)=\min\left(\mbox{supp}(x)\right)$,
  which exists since $\mbox{supp}(x)$ is well-ordered. Moreover, we set $\lambda(0)=\infty$. Given a nonzero $x=\sum_{q\in \mbox{supp}(x)}a_qd^q$, then $x>0$ if and only if $a_{\lambda(x)}>0$.

The smallest such
field $\mathcal{N}$ is the Levi-Civita field $\mathcal{R}$, first introduced in \cite{levicivita1,levicivita2}. In this case
$S_\mathcal{R}=\ensuremath{\mathbb{Q}}$, and for any element $x\in\mathcal{R}$, supp$(x)$ is a left-finite
subset of $\ensuremath{\mathbb{Q}}$, i.e. below any rational bound $r$ there are only finitely many exponents in the Hahn representation of $x$. The Levi-Civita field
$\mathcal{R}$ is of particular interest because of its practical usefulness. Since the supports of the elements of $\mathcal{R}$ are left-finite, it is possible to represent these
numbers on a computer. Having infinitely
small numbers allows for many computational
applications; one
such application is the computation of derivatives of real functions representable on a computer \cite{rsdiffsf,dabul00}, where both the accuracy of
formula manipulators and the speed of classical numerical methods are achieved. For a review of the Levi-Civita field $\mathcal{R}$, see
\cite{rsrevitaly13} and references therein.

In the wider context of valuation theory, it is
interesting to note that the topology induced by the order on $\mathcal{N}$ is the same as the valuation topology $\tau_v$ introduced via the ultrametric $\Lambda:\mathcal{N}\times\mathcal{N}\rightarrow\mathbb{R}$, given
by $\Lambda(x,y)=\exp{(-\lambda(x-y))}$. It follows therefore that the field $\mathcal{N}$ is just a special
case of the class of fields discussed in \cite{schikhofbook}. For a general
overview of the algebraic properties of formal power series fields, we refer to the comprehensive overview by Ribenboim \cite{ribenboim92}, and
for an overview of the related valuation theory the book by Krull \cite{krull32}. A thorough and complete treatment of ordered structures can
also be found in \cite{priessbook}. A more comprehensive survey of all non-Archimedean fields can be found in \cite{barria-sham-18}.

\section{Weak Local Uniform Differentiability and Review of Recent Results}

Because of the total disconnectedness of the field $\mathcal{N}$ in the order topology, the standard theorems of real calculus like the
intermediate value theorem, the inverse function theorem, the mean value theorem, the implicit function theorem and Taylor's theorem require stronger smoothness criteria of the functions involved in order for the theorems to hold.
In this section we will present one such criterion: the so-called \lq weak local uniform differentiability\rq,
we will review recent work based on that smoothness criterion and then present new results.

In \cite{boo-sham-18}, we focus our attention on $\mathcal{N}$-valued functions of one variable. We study the properties of weakly locally uniformly differentiable (WLUD)
functions at a point $x_0\in\mathcal{N}$ or on an open subset $A$ of $\mathcal{N}$. In particular, we show that WLUD functions are $C^1$, they include all polynomial functions,
and they are closed under addition, multiplication and composition. Then we generalize the definition of weak local uniform differentiability to any order. In particular,
we study the properties of WLUD$^2$ functions at a point $x_0\in\mathcal{N}$ or on an open subset $A$ of $\mathcal{N}$; and we show that WLUD$^2$ functions are $C^2$,
they include all polynomial functions, and they are closed under addition, multiplication and composition. Finally, we
formulate and prove an inverse function theorem as well as a local intermediate value theorem and a local mean value theorem for these functions.

Here we only recall the main definitions and results (without proofs) in \cite{boo-sham-18} and refer the reader to that paper for the details.

\begin{defn}
Let $A\subseteq \mathcal{N}$ be open, let $f:A\rightarrow \mathcal{N}$, and let $x_0\in A$ be given. We say that $f$ is weakly locally uniformly differentiable (abbreviated as WLUD)
at $x_0$ if $f$ is differentiable in a neighbourhood $\Omega$ of $x_0$ in $A$ and if for every $\epsilon > 0$ in $\mathcal{N}$ there exists $\delta > 0$ in $\mathcal{N}$ such that $(x_0 - \delta, x_0 + \delta) \subset \Omega$, and
for
every $x,y \in (x_0 - \delta, x_0 + \delta)$ we have that $\abs{f(y) - f(x) - f^\prime (x)(y-x)} \le \epsilon \abs{y-x}$. Moreover, we say that $f$ is WLUD on $A$ if $f$
is WLUD at every point in $A$.
\end{defn}

We extend the WLUD concept to higher orders of differentiability and we define WLUD$^k$ as follows.

\begin{defn}\label{def:wludn}
Let $A\subseteq \mathcal{N}$ be open, let $f:A\rightarrow \mathcal{N}$, let $x_0\in A$, and let $k\in\mathbb{N}$ be given. We say that $f$ is WLUD$^k$ at $x_0$ if $f$ is $k$ times
differentiable in a neighbourhood $\Omega$ of $x_0$ in $A$ and if for every $\epsilon > 0$ in $\mathcal{N}$ there exists $\delta > 0$ in $\mathcal{N}$ such that $(x_0-\delta,x_0+\delta) \subset \Omega$, and for every
$x,y \in (x_0-\delta,x_0+\delta)$  we have that
\[
\left|f(y) - \sum\limits_{j=0}^k \frac{f^{(j)}(x)}{j!}(y-x)^j\right|\le \epsilon \left|y-x\right|^k.
\]
 Moreover, we say that $f$ is WLUD$^k$  on $A$ if $f$ is WLUD$^k$ at every point in $A$. Finally, we say that $f$ is  WLUD$^\infty$ at $x_0$ (respectively, on $A$) if $f$ is WLUD$^k$ at $x_0$ (respectively, on $A$) for every
$k\in\mathbb{N}$.
\end{defn}

\begin{theorem}[Inverse Function Theorem]
Let $A\subseteq\mathcal{N}$ be open, let $f:A\rightarrow \field $ be WLUD on $A$, and let $x_0 \in A$ be such that $f^\prime(x_0) \neq 0$. Then there exists a neighborhood $\Omega$
of $x_0$
in $A$ such that
\begin{enumerate}
\item $\left.f\right|_\Omega$ is one-to-one;
\item $f(\Omega)$ is open; and
\item $f^{-1}$ exists and is WLUD on $f(\Omega)$ with $(f^{-1})^\prime = 1/\left(f^\prime \circ f^{-1}\right)$.
\end{enumerate}
\end{theorem}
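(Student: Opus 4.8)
The plan is to establish the three conclusions in turn, after first shrinking to a good neighbourhood. Since $f$ is WLUD on $A$ it is $C^1$, so $f'$ is continuous and, because $f'(x_0)\neq 0$, I can choose $\delta_1>0$ so that $\abs{f'(x)-f'(x_0)}<\abs{f'(x_0)}/4$ — hence $\abs{f'(x)}>3\abs{f'(x_0)}/4$ — for all $x$ in $\Omega:=(x_0-\delta_1,x_0+\delta_1)$. Applying the WLUD property at $x_0$ with $\epsilon=\abs{f'(x_0)}/4$ and shrinking $\delta_1$ if necessary, I get, for all $x,y\in\Omega$,
\[
\abs{f(y)-f(x)}\ \ge\ \abs{f'(x)}\,\abs{y-x}-\tfrac{\abs{f'(x_0)}}{4}\abs{y-x}\ \ge\ \tfrac{\abs{f'(x_0)}}{2}\,\abs{y-x}.
\]
This two-sided Lipschitz estimate immediately gives injectivity of $\left.f\right|_\Omega$ (conclusion (1)) and shows that $f^{-1}$, once it exists, is Lipschitz, hence continuous.

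For conclusion (2) I would show that every $y$ close to $y_0=f(x_1)$, $x_1\in\Omega$, is attained by $f$ on $\Omega$, via the usual fixed-point reformulation: a solution of $f(x)=y$ is a fixed point of $T(x)=x-\frac{1}{f'(x_1)}\bigl(f(x)-y\bigr)$. Using the WLUD expansion $f(x)-f(x')=f'(x')(x-x')+R$ with $\abs{R}\le\epsilon'\abs{x-x'}$ together with the continuity of $f'$, one finds
\[
T(x)-T(x')=(x-x')\Bigl(1-\tfrac{f'(x')}{f'(x_1)}\Bigr)-\tfrac{R}{f'(x_1)},
\]
so $\abs{T(x)-T(x')}\le\eta\,\abs{x-x'}$ on a small closed ball $\bar B(x_1,r)\subseteq\Omega$. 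Here is the crucial non-Archimedean point, and the step I expect to be the main obstacle: an ordinary real contraction constant $\eta<1$ is \emph{useless} in $\mathcal{N}$, since $\eta^{n}$ does not tend to $0$ when $\eta\in(0,1)\cap\mathbb{R}$ (its valuation stays $0$); I must instead make $\eta$ \emph{infinitesimal}, which is possible because I may take $\epsilon'$ infinitesimal in the WLUD condition and, by continuity of $f'$, shrink $r$ until $\abs{f'(x_1)-f'(x')}/\abs{f'(x_1)}$ is infinitesimal as well. With $\lambda(\eta)>0$ one has $\lambda(\eta^{n})=n\lambda(\eta)\to\infty$, so $\eta^n\to 0$ and the Picard iterates form a Cauchy sequence; Cauchy completeness of $\mathcal{N}$ (and closedness of $\bar B(x_1,r)$) then yields a fixed point, provided $\abs{y-y_0}$ is small enough that $T$ maps $\bar B(x_1,r)$ into itself. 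This produces an open ball about each $y_0\in f(\Omega)$, proving $f(\Omega)$ open. (Alternatively, strict monotonicity of $f$ on $\Omega$, which also follows from the first display, together with the local intermediate value theorem of \cite{boo-sham-18}, gives the same conclusion.)

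Finally, for conclusion (3), the differentiability of $f^{-1}$ with $(f^{-1})'=1/(f'\circ f^{-1})$ is the standard computation: writing $x=f^{-1}(u)$ and $x'=f^{-1}(v)$, the difference quotient $\frac{f^{-1}(v)-f^{-1}(u)}{v-u}=\frac{x'-x}{f(x')-f(x)}$ tends to $1/f'(x)$ as $v\to u$, using continuity of $f^{-1}$ and $f'(x)\neq0$. To upgrade this to WLUD I would use the algebraic identity
\[
f^{-1}(v)-f^{-1}(u)-(f^{-1})'(u)(v-u)=-\tfrac{1}{f'(x)}\bigl(f(x')-f(x)-f'(x)(x'-x)\bigr),
\]
bound the right-hand side by $\frac{\epsilon'}{\abs{f'(x)}}\abs{x'-x}$ via the WLUD property of $f$, and convert $\abs{x'-x}$ back into a multiple of $\abs{v-u}$ using the two-sided Lipschitz estimate from the first paragraph. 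Since $1/\abs{f'(x)}\le 2/\abs{f'(x_0)}$ uniformly on $\Omega$, choosing $\epsilon'$ small enough (and then the corresponding $\delta$ for $u,v$, via continuity of $f^{-1}$) makes the defect $\le\epsilon\abs{v-u}$, establishing that $f^{-1}$ is WLUD on $f(\Omega)$.
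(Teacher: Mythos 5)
This paper does not actually contain a proof of the statement: the Inverse Function Theorem is recalled from \cite{boo-sham-18} in the review section, which the author explicitly says is presented ``without proofs,'' so there is no in-paper argument to compare against line by line. Judged on its own merits, your proposal is correct and follows what is essentially the classical route adapted to $\mathcal{N}$, in the same spirit as the source paper: injectivity from the WLUD estimate with $\epsilon$ proportional to $|f'(x_0)|$, openness of the image via a Picard fixed-point iteration for $T(x)=x-\bigl(f(x)-y\bigr)/f'(x_1)$, and the WLUD property of $f^{-1}$ via the exact algebraic identity you display (which is indeed an identity, not an approximation, once $(f^{-1})'(u)=1/f'(x)$ is inserted). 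The point of substance, which you correctly flag as the crux, is that a real contraction constant $\eta\in(0,1)$ is worthless in the order topology, since $\lambda(\eta^n)=n\lambda(\eta)=0$ for all $n$ and so $\eta^n\not\to0$; your fix --- taking $\epsilon'$ infinitesimal in the WLUD condition (legitimate, because Definition 1 quantifies $\epsilon$ over all positive elements of $\mathcal{N}$, infinitesimals included) and shrinking $r$ until $|f'(x_1)-f'(x')|/|f'(x_1)|$ is infinitesimal, so that $\lambda(\eta)>0$, $\lambda(\eta^n)\to\infty$, and Cauchy completeness yields the fixed point --- is exactly what the non-Archimedean setting demands, and it also uses correctly that the Hahn group is Archimedean (so $n\lambda(\eta)\to\infty$ in $\mathbb{R}$). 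Two small points to tighten. First, the contraction estimate requires the WLUD property at the point $x_1\in\Omega$, not at $x_0$; this is unproblematic because the hypothesis is WLUD on all of $A$, but you should invoke it explicitly. Second, strict monotonicity does \emph{not} follow from your displayed two-sided bound on $|f(y)-f(x)|$, which gives only injectivity; it needs the signed estimate $f(y)-f(x)\ge\bigl(f'(x)-|f'(x_0)|/4\bigr)(y-x)$ for $y>x$ together with the observation that $|f'(x)-f'(x_0)|<|f'(x_0)|/4$ forces $f'$ to have constant sign on $\Omega$ --- both available from your setup, but needed if you want the alternative openness argument via the local intermediate value theorem (Theorem \ref{tlivt}) to go through.
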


\begin{theorem}[Local Intermediate Value Theorem]\label{tlivt}
Let $A\subseteq\mathcal{N}$ be open, let $f:A\rightarrow \field $ be WLUD on $A$, and let $x_0 \in A$ be such that $f^\prime(x_0) \neq 0$.  Then there exists a neighborhood $\Omega$
of $x_0$ in $A$ such that for
 any $a<b$ in $f(\Omega)$ and for any $c\in (a,b)$, there is an $x\in\left(\min\left\{f^{(-1)}(a),f^{(-1)}(b)\right\},\max\left\{f^{(-1)}(a),f^{(-1)}(b)\right\}\right)$
 such that $f(x)=c$.
\end{theorem}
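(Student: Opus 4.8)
The plan is to reduce the statement to two local facts about $f$ near $x_0$: that $f$ is strictly monotonic on a small interval, and that $f$ actually attains the value $c$ somewhere in that interval. First I would invoke the Inverse Function Theorem to produce a neighbourhood $\Omega$ of $x_0$, which I take to be an interval $(x_0-\delta,x_0+\delta)$, on which $f$ is one-to-one, $f(\Omega)$ is open, and $f^{-1}$ is WLUD with $(f^{-1})' = 1/\left(f'\circ f^{-1}\right)$. Given $a<b$ in $f(\Omega)$ and $c\in(a,b)$, the natural candidate point is $x^\ast := f^{-1}(c)$, so the whole argument amounts to showing that $c$ lies in the domain $f(\Omega)$ (so that $x^\ast$ exists) and that $x^\ast$ falls strictly between $f^{-1}(a)$ and $f^{-1}(b)$.

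For the monotonicity, since a WLUD function is $C^1$ and $f'(x_0)\ne 0$, I would assume without loss of generality that $f'(x_0)>0$ and use continuity of $f'$ to shrink $\Omega$ so that $f'(x)>f'(x_0)/2$ throughout. Applying the WLUD inequality with $\epsilon = f'(x_0)/4$ then gives, for $x<y$ in $\Omega$,
\[
 f(y)-f(x) \ge f'(x)(y-x) - \tfrac{f'(x_0)}{4}(y-x) \ge \tfrac{f'(x_0)}{4}(y-x) > 0,
\]
so $f$ is strictly increasing and $f^{-1}$ is order-preserving on $f(\Omega)$. Consequently $a<c<b$ forces $f^{-1}(a)<f^{-1}(c)<f^{-1}(b)$, which is exactly the required membership of $x^\ast$ in the open interval, once $x^\ast$ is known to exist.

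The hard part will be producing $x^\ast$, that is, the surjectivity onto the value $c$: here the classical intermediate value theorem is unavailable because $\mathcal{N}$ is totally disconnected, so no connectedness argument applies, and I would replace it by a contraction-mapping argument exploiting Cauchy completeness. Setting $m = f'(x_0)$, I would study $\phi(x) = x - \left(f(x)-c\right)/m$, whose fixed points are precisely the solutions of $f(x)=c$. Writing $f(y)-f(x)=f'(x)(y-x)+R$ with $\abs{R}\le\epsilon\abs{y-x}$ and using that $\abs{f'(x)-m}$ is small on $\Omega$, one obtains
\[
 \phi(y)-\phi(x) = \Bigl(1-\tfrac{f'(x)}{m}\Bigr)(y-x) - \tfrac{R}{m},
\]
so $\phi$ is a contraction of ratio at most $1/2$ once $\Omega$ is chosen small enough (continuity of $f'$) and $\epsilon$ small enough. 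The remaining subtlety is to make $\phi$ a self-map of a closed interval $\overline{B}\subseteq\Omega$ centred at $x_0$: since $\abs{\phi(x_0)-x_0} = \abs{c-f(x_0)}/\abs{m}$ and every value in $f(\Omega)$ lies within roughly $\abs{m}\delta$ of $f(x_0)$, I would balance the radius of $\overline{B}$ against $\delta$ so that $\phi(\overline{B})\subseteq\overline{B}$. Cauchy completeness of $\mathcal{N}$ then forces the iteration $x_{n+1}=\phi(x_n)$ to converge to a fixed point $x^\ast\in\overline{B}$ with $f(x^\ast)=c$, completing the proof. The main obstacle throughout is precisely this balancing act, keeping $\phi$ simultaneously a contraction and a self-map of a complete set, which is the non-Archimedean substitute for the connectedness one would use over $\setR$.
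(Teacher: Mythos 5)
Your overall architecture --- inverse function theorem for injectivity, the WLUD inequality for strict monotonicity, and a fixed-point iteration for $\phi(x)=x-\left(f(x)-c\right)/m$ in place of any connectedness argument --- is the right non-Archimedean substitute, and it matches the strategy of \cite{boo-sham-18}, where this theorem is actually proved (the present paper only restates it without proof). Your monotonicity step with $\epsilon=f^{\prime}(x_0)/4$ is fine. But there is a genuine gap at the decisive moment: \emph{a contraction of ratio $1/2$ does not yield a convergent iteration in $\mathcal{N}$}. In the order topology, $(1/2)^n\not\rightarrow 0$: for an infinitesimal $\epsilon>0$ one has $(1/2)^n>\epsilon$ for every $n\in\mathbb{N}$, since every positive real number is infinitely larger than $\epsilon$. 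Hence the bound $\abs{x_m-x_n}\le (1/2)^{n-1}\abs{x_1-x_0}$ does not make the sequence $(x_n)$ Cauchy (unless $\abs{x_1-x_0}$ happens to already be below every relevant scale), Cauchy completeness cannot be invoked, and the proof halts exactly where you declare convergence to $x^\ast$. An ``Archimedean'' contraction rate is worthless here; this is the same phenomenon that defeats the classical proofs in the first place.

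The repair stays inside your framework and shows why the WLUD definition quantifies over all $\epsilon>0$ \emph{in $\mathcal{N}$}, infinitesimals included: choose $\epsilon$ infinitely small relative to $m=f^{\prime}(x_0)$, i.e.\ $\lambda(\epsilon/m)>0$. Applying the WLUD inequality to the pairs $(x,y)$ and $(y,x)$ and adding gives $\abs{f^{\prime}(x)-f^{\prime}(y)}\le 2\epsilon$ on the $\delta$-interval, so your own computation bounds the contraction factor by $k:=3\epsilon/\abs{m}$, which now satisfies $\lambda(k)>0$; then $\lambda(k^n)=n\lambda(k)\rightarrow\infty$, so $k^n\rightarrow 0$ in the order topology, the iterates are Cauchy, and completeness applies. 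You can also dispense with your balancing act for the self-map: run the iteration on the closed interval $I$ with endpoints $\alpha=f^{-1}(a)$ and $\beta=f^{-1}(b)$ (both in $\Omega$). Assuming $m>0$, one has $\phi(\alpha)-\alpha=(c-a)/m>0$ and $\phi(\beta)-\beta=(c-b)/m<0$, while $\phi$ is strictly increasing on $I$ because its increments are $(y-x)\left(1-f^{\prime}(x)/m\right)-R/m$ with both corrections infinitesimal relative to $y-x$; hence $\phi(I)\subseteq\left(\alpha,\beta\right)$. The fixed point $x^\ast=\phi(x^\ast)$ then lies strictly between $f^{-1}(a)$ and $f^{-1}(b)$ with $f(x^\ast)=c$, which is precisely the location the theorem asserts --- and this also sidesteps your unresolved worry about whether $c\in f(\Omega)$, since $x^\ast$ is constructed directly rather than as $f^{-1}(c)$.
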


\begin{theorem}[Local Mean Value Theorem]\label{thmvt}
Let $A\subseteq\mathcal{N}$ be open, let $f:A\rightarrow \field $ be WLUD$^2$ on $A$, and let $x_0 \in A$ be such that  $f^{\prime\prime}(x_0) \neq 0$. Then there exists a neighborhood $\Omega$ of
$x_0$ in $A$ such that $f$ has the mean value property on $\Omega$. That is, for every $a,b\in \Omega$ with $a<b$, there exists $c\in (a,b)$ such that
\[
f^\prime(c) = \frac{f(b) - f(a)}{b-a}.
\]
\end{theorem}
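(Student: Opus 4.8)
The plan is to reduce the assertion to the Local Intermediate Value Theorem (Theorem~\ref{tlivt}) applied to the derivative $f'$. First I would check that $f'$ is itself WLUD on a neighborhood of $x_0$, with $(f')'=f''$. To see this, fix $\epsilon>0$, take the corresponding $\delta$ from the WLUD$^2$ hypothesis, and add the WLUD$^2$ estimate written with base point $x$ and argument $y$ to the one written with base point $y$ and argument $x$. The function values cancel; factoring out and dividing by $\abs{y-x}$ leaves
\[
\abs{\bigl(f'(y)-f'(x)\bigr)-\tfrac{f''(x)+f''(y)}{2}(y-x)}\le 2\epsilon\abs{y-x}.
\]
Using the continuity of $f''$ (a WLUD$^2$ function is $C^2$) to replace $\tfrac{f''(x)+f''(y)}{2}$ by $f''(x)$ up to an arbitrarily small error, one obtains $\abs{f'(y)-f'(x)-f''(x)(y-x)}\le 3\epsilon\abs{y-x}$ for $x,y$ close enough to $x_0$. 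Since $\epsilon$ is arbitrary, $f'$ is WLUD at $x_0$, and $(f')'(x_0)=f''(x_0)\neq 0$.

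Next I would apply the Local Intermediate Value Theorem (Theorem~\ref{tlivt}) to $g:=f'$, which is legitimate by the previous step. This produces a neighborhood $\Omega$ of $x_0$ on which $g$ is one-to-one and has the stated intermediate value property; injectivity guarantees that the preimages appearing in that theorem satisfy $g^{-1}(g(a))=a$ and $g^{-1}(g(b))=b$, so the interval there reduces to $(a,b)$. I would further shrink $\Omega$ so that, by continuity of $f''$ and $f''(x_0)\neq 0$, the sign of $f''$ is constant on $\Omega$ and $\abs{f''}\ge\mu$ there for some fixed $\mu>0$ in $\field$, and so that $\Omega\subset(x_0-\delta,x_0+\delta)$ for the $\delta$ attached to the single choice $\epsilon=\mu/4$ in the WLUD$^2$ estimate for $f$.

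The heart of the argument is to locate the mean slope $m:=\dfrac{f(b)-f(a)}{b-a}$ strictly between $f'(a)$ and $f'(b)$. Assuming without loss of generality $f''>0$ on $\Omega$, I would run the WLUD$^2$ estimate once with base point $a$ and argument $b$, and once with base point $b$ and argument $a$. Dividing the first by $(b-a)>0$ gives
\[
\abs{m-f'(a)-\tfrac{f''(a)}{2}(b-a)}\le\epsilon(b-a),
\]
whence $m-f'(a)\ge\bigl(\tfrac{\mu}{2}-\tfrac{\mu}{4}\bigr)(b-a)>0$; the second yields $f'(b)-m\ge\tfrac{\mu}{4}(b-a)>0$ in the same way. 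Hence $f'(a)<m<f'(b)$. Taking $c$ to be the point supplied by Theorem~\ref{tlivt} for $g=f'$ with the value $m\in\bigl(f'(a),f'(b)\bigr)$, injectivity places $c$ in $(a,b)$ and gives $f'(c)=m$, as required. (If instead $f''<0$ on $\Omega$, the same computation gives $f'(b)<m<f'(a)$ and the argument is symmetric.)

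I expect the main obstacle to be the first step, the passage from the WLUD$^2$ control of $f$ to the WLUD control of $f'$: symmetrization produces the average $\tfrac{f''(x)+f''(y)}{2}$ rather than $f''(x)$, and closing the estimate requires controlling $\abs{f''(y)-f''(x)}$ \emph{uniformly} for $x,y$ near $x_0$. Extracting this uniform smallness from the $C^2$ regularity of $f$ in the totally disconnected field $\field$ — where the usual uniform-continuity arguments are unavailable — is the delicate point, and it is precisely where the uniform nature of the WLUD hypotheses, rather than bare differentiability, must be exploited.
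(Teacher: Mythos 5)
Your proposal is correct and follows essentially the same route as the proof in \cite{boo-sham-18}, to which the paper defers (the theorem is restated here without proof): symmetrize the WLUD$^2$ estimate to show that $f'$ is WLUD with derivative $f''$, trap the mean slope $m=\frac{f(b)-f(a)}{b-a}$ strictly between $f'(a)$ and $f'(b)$ via the uniform second-order estimate with $\epsilon$ tied to a lower bound $\mu$ on $\vert f''\vert$, and then apply Theorem \ref{tlivt} to $f'$, using local injectivity to place $c$ in $(a,b)$. The one comment worth making is that the ``delicate point'' you flag at the end is actually harmless: since $x$ and $y$ both lie in a shrinking neighborhood of the base point, pointwise continuity of $f''$ there (available because WLUD$^2$ functions are $C^2$, as the paper recalls) already yields $\vert f''(y)-f''(x)\vert\le 2\epsilon$ by the triangle inequality, with no uniform-continuity argument needed — and running this at every point of $A$ upgrades your ``$f'$ is WLUD at $x_0$'' to ``$f'$ is WLUD on $A$,'' which is what the hypothesis of Theorem \ref{tlivt} literally requires.
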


As in the real case, the mean value property can be used to prove other important results. In particular, while  L'H\^opital's rule does not hold for differentiable functions on
$\mathcal{N}$, we prove the result under similar conditions to those of the local mean value theorem.

\begin{theorem}[L'H\^opital's Rule]
Let $A\subset \mathcal{N}$ be open, let $f,g:A\rightarrow \mathcal{N}$ be WLUD$^2$ on $A$, and let $a\in A$ be such that $f^{\prime\prime}(a) \neq 0$ and $g^{\prime\prime}(a) \neq 0$.
Furthermore, suppose that $f(a) = g(a) = 0$, that there exists a neighborhood
$\Omega$ of $a$ in $A$ such that $g^{\prime}(x) \neq 0$ for every $x\in \Omega\setminus\{a\}$, and that  $\lim\limits_{x\rightarrow a} f^{\prime}(x)/g^{\prime}(x)$ exists. Then
\[
\lim_{x\rightarrow a} \frac{f(x)}{g(x)} = \lim_{x\rightarrow a} \frac{f^{\prime}(x)}{g^{\prime}(x)}.
\]
\end{theorem}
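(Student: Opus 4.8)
The plan is to bypass the classical Cauchy mean value route and instead read off the two limits directly from the second-order expansions furnished by the WLUD$^2$ hypothesis. Write $L=\lim_{x\to a}f'(x)/g'(x)$, which exists by assumption; the goal is to show $\lim_{x\to a}f(x)/g(x)=L$. First I would apply Definition \ref{def:wludn} with $k=2$ to $f$ and to $g$ at the base point $x=a$: for every $\epsilon>0$ there is a common $\delta>0$ (intersect the two neighbourhoods and take the smaller radius) such that, using $f(a)=g(a)=0$, for all $x$ with $0<|x-a|<\delta$ one has
\[
\left| f(x)-f'(a)(x-a)-\tfrac12 f''(a)(x-a)^2\right|\le \epsilon|x-a|^2
\]
together with the analogous bound for $g$. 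Because $g''(a)\neq 0$ (and, when $g'(a)\neq0$, the linear term dominates), these estimates already force $g(x)\neq 0$ for $0<|x-a|<\delta$ once $\epsilon$ is taken smaller than $|g''(a)|/2$, so the quotient $f/g$ is defined on a punctured neighbourhood of $a$.

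Then I would split into two cases according to the value of $g'(a)$. If $g'(a)\neq0$, dividing the two expansions by $(x-a)$ and taking $\epsilon$ arbitrarily small gives $\lim_{x\to a}f(x)/g(x)=f'(a)/g'(a)$; on the other hand $f'$ and $g'$ are continuous (WLUD$^2$ implies $C^2$), so $L=\lim_{x\to a}f'(x)/g'(x)=f'(a)/g'(a)$, and the two values agree. If instead $g'(a)=0$, I first argue that the hypothesis ``$L$ exists'' forces $f'(a)=0$: were $f'(a)\neq0$, then as $x\to a$ we would have $g'(x)\to0$ while $f'(x)\to f'(a)\neq0$, so $|f'(x)/g'(x)|$ would become infinitely large in $\mathcal{N}$ and no element $L\in\mathcal{N}$ could satisfy $|f'(x)/g'(x)-L|<1$ eventually --- contradicting convergence. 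With $f'(a)=g'(a)=0$, dividing the expansions by $(x-a)^2$ yields $\lim_{x\to a}f(x)/g(x)=f''(a)/g''(a)$, while expanding $f'(x)=f''(a)(x-a)+o(x-a)$ and $g'(x)=g''(a)(x-a)+o(x-a)$ (differentiability of $f'$ and $g'$ at $a$) gives $L=f''(a)/g''(a)$ as well. In all cases $\lim_{x\to a}f(x)/g(x)=L$, as desired.

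The main obstacle is precisely what makes me avoid the textbook derivation through a Cauchy-type mean value theorem. The usual auxiliary function $h(t)=g(x)f(t)-f(x)g(t)$ satisfies $h(a)=h(x)=0$, so one would like to apply the Local Mean Value Theorem (Theorem \ref{thmvt}) to produce $c$ with $h'(c)=0$; but that theorem requires $h''(a)\neq0$, whereas $h''(a)=g(x)f''(a)-f(x)g''(a)$ vanishes to leading order exactly in the critical case $f'(a)=g'(a)=0$ (both $f(x)$ and $g(x)$ are then of order $(x-a)^2$, and the cross terms cancel). Thus the symmetric hypotheses $f''(a)\neq0$, $g''(a)\neq0$ do not by themselves guarantee that the mean value property applies to $h$, and the Cauchy route stalls at the one case that matters. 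Reading the limits off the uniform second-order remainder estimates instead is what makes the argument go through, and it handles the two-sided limit uniformly, since the bounds are expressed in $|x-a|^2$ and are insensitive to the sign of $x-a$. The only points needing genuine care are the non-Archimedean justification that an infinitely large ratio precludes a finite limit, and the bookkeeping that keeps the chosen $\delta$ simultaneously valid for the expansion of $f$, the expansion of $g$, and the nonvanishing of $g$.
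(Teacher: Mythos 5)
Your proof is correct, but note that this paper does not actually contain a proof of the theorem: it is quoted, without proof, from \cite{boo-sham-18}, and the surrounding remark (``the mean value property can be used to prove other important results'') indicates that the intended derivation there goes through the Local Mean Value Theorem (Theorem \ref{thmvt}) via a Cauchy-type auxiliary function. You take a genuinely different route, reading both limits directly off the uniform second-order expansion at the fixed base point $a$ supplied by Definition \ref{def:wludn} with $k=2$, and splitting on whether $g'(a)\neq 0$. The individual steps all survive scrutiny in $\mathcal{N}$: the lower bound $|g(x)|\ge\left(\tfrac12|g''(a)|-\epsilon\right)|x-a|^2$ (resp.\ its linear analogue, after also shrinking $\delta$ below $|g'(a)|/\left(\tfrac12|g''(a)|+\epsilon\right)$ when $g'(a)\neq0$) correctly replaces the Rolle-type argument for $g\neq 0$ that is unavailable here; limit arithmetic is a purely ordered-field matter and needs no completeness; continuity of $f'$ and $g'$ is available because WLUD$^2$ implies $C^2$, as reviewed in Section 2; and in the degenerate case your identification $L=f''(a)/g''(a)$ uses only the difference-quotient definition of $f''(a)$ applied to $f'$ with $f'(a)=0$, not any mean value theorem. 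Your diagnosis of why the textbook route is delicate is also apt: for $h(t)=g(x)f(t)-f(x)g(t)$ the leading contributions to $h''(a)=g(x)f''(a)-f(x)g''(a)$ cancel exactly when $f'(a)=g'(a)=0$, and moreover $h$ depends on $x$, so the neighborhood furnished by Theorem \ref{thmvt} would as well. What your route buys: it is self-contained at the level of the definitions, handles the two-sided limit uniformly, and in fact proves slightly more than is stated --- you never use the hypothesis $f''(a)\neq 0$, and you invoke the existence of $\lim_{x\rightarrow a}f'(x)/g'(x)$ only to exclude the case $f'(a)\neq 0=g'(a)$, identifying the limit explicitly as $f'(a)/g'(a)$ or $f''(a)/g''(a)$ in the respective cases. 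What the mean-value route buys, by contrast, is methodological uniformity with the real-analysis template and with the other consequences of the mean value property developed in \cite{boo-sham-18}.
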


In \cite{MultivWLUD},  we formulate and prove a Taylor theorem with remainder  for WLUD$^k$ functions from $\mathcal{N}$ to $\mathcal{N}$. Then we extend the concept of WLUD to functions from  $\mathcal{N}^n$ to
$\mathcal{N}^m$ with $m,n\in\mathbb{N}$ and study the properties of those functions as we did for functions from $\mathcal{N}$ to $\mathcal{N}$. Then we formulate and prove
the inverse function theorem for WLUD functions from $\mathcal{N}^n$ to
$\mathcal{N}^n$ and the implicit function theorem for WLUD functions from $\mathcal{N}^n$ to $\mathcal{N}^m$ with $m<n$ in $\mathbb{N}$.

As in the real case, the proof of Taylor's theorem with remainder uses the mean value theorem.
However, in the non-Archimedean setting, stronger conditions on the function are needed than in the real case for the formulation of the theorem.

\begin{theorem}\label{thmtaylor}
(Taylor's Theorem with Remainder) Let $A \subseteq \mathcal{N}$ be open, let $k\in\mathbb{N}$ be given, and let $f : A \rightarrow \mathcal{N}$ be WLUD$^{k+2}$ on $A$.
Assume further that $f^{(j)}$ is WLUD$^{2}$ on $A$ for $0\leq j \leq k$. Then, for every $x\in A$, there exists a neighborhood $U$ of $x$ in $A$ such that,  for any $y \in U$,
there exists
$c \in  \left[ \min(y, x),\max(y, x) \right]$ such that
\begin{equation}\label{eqtaylor}
f(y)=\sum_{j=0}^{k} \frac{f^{(j)}\left(x\right)}{j !}\left(y-x\right)^{j}+\frac{f^{(k+1)}(c)}{(k+1) !}\left(y-x\right)^{k+1}.
\end{equation}

\end{theorem}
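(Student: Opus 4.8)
The plan is to imitate the classical real-variable proof through an auxiliary function and a Rolle-type argument, but to carry it out with the \emph{local} mean value theorem (Theorem \ref{thmvt}) in place of the ordinary one. Fix $x \in A$, and for $y$ near $x$ with $y \neq x$ introduce the constant
\[
M = \frac{f(y) - \sum_{j=0}^{k} \frac{f^{(j)}(x)}{j!}(y-x)^j}{(y-x)^{k+1}},
\]
so that the theorem is equivalent to exhibiting a point $c$ between $x$ and $y$ with $M = f^{(k+1)}(c)/(k+1)!$. To locate such a $c$ I would use the auxiliary function
\[
g(t) = f(y) - \sum_{j=0}^{k} \frac{f^{(j)}(t)}{j!}(y-t)^j - M\,(y-t)^{k+1},
\]
viewed as a function of $t$ with $y$ held fixed, and record the two endpoint conditions $g(x) = 0$ (forced by the choice of $M$) and $g(y) = 0$ (since every term of the sum except $j=0$ vanishes at $t=y$).

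Next I would differentiate $g$ and exploit the telescoping that drives the classical argument: differentiating $\frac{f^{(j)}(t)}{j!}(y-t)^j$ yields $\frac{f^{(j+1)}(t)}{j!}(y-t)^j - \frac{f^{(j)}(t)}{(j-1)!}(y-t)^{j-1}$, and summing over $j$ cancels all but the top term, so that
\[
g'(t) = \left( (k+1)M - \frac{f^{(k+1)}(t)}{k!} \right)(y-t)^k.
\]
Hence any $c \neq y$ with $g'(c) = 0$ gives $(k+1)M = f^{(k+1)}(c)/k!$, i.e. $M = f^{(k+1)}(c)/(k+1)!$, which is precisely the asserted remainder identity. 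The existence of such a $c$ strictly between $x$ and $y$ is a Rolle conclusion, and Rolle is the case $g(x)=g(y)$ of the mean value property supplied by Theorem \ref{thmvt}. Before invoking it I must check that $g$ is WLUD$^2$ near $x$, and this is exactly where the hypotheses enter: each summand $\frac{f^{(j)}(t)}{j!}(y-t)^j$ is the product of $f^{(j)}$, which is WLUD$^2$ by assumption for $0 \le j \le k$, with the polynomial $(y-t)^j$, which is itself WLUD$^2$; since the WLUD$^2$ functions are closed under addition and multiplication \cite{boo-sham-18}, $g$ is WLUD$^2$, while the assumption that $f$ is WLUD$^{k+2}$ guarantees that $f^{(k+1)}$ and $f^{(k+2)}$ exist, so that $g'$ and $g''$ are the genuine derivatives written above.

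The hard part will be the mean value step itself, because Theorem \ref{thmvt} is only \emph{local} and imposes two restrictions absent in the real case: it returns a neighbourhood $\Omega$ that a priori depends on the function to which it is applied, and it requires that function to have nonzero second derivative at the base point. Since $g$ depends on the parameter $y$, a naive application would produce a neighbourhood varying with $y$, whereas the statement demands a single $U$ valid for every $y \in U$. I would resolve the uniformity by extracting $U$ from the $\delta$--$\epsilon$ data built into the WLUD$^2$ property of the finitely many functions $f, f', \dots, f^{(k)}$ at $x$, which is independent of $y$, and by verifying that the neighbourhood delivered by Theorem \ref{thmvt} inherits this uniformity because the WLUD$^2$ data of $g$ is assembled from that of the $f^{(j)}$. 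The genuinely delicate point is the non-degeneracy $g''(x) \neq 0$, which is not automatic and may fail for particular $y$ (already for $k=0$ the statement reduces to the mean value property for $f$, yet the hypotheses do not exclude $f''(x)=0$); I expect to handle this by perturbing $M$, or equivalently $g$, by an infinitesimal multiple of $(t-x)^2$ to force a nonzero second derivative, applying the local mean value theorem to the perturbed function, and then using the Cauchy completeness of $\mathcal{N}$ together with the continuity of $f^{(k+1)}$ to pass to the limit, or else by treating the degenerate configuration directly. Once $g'(c)=0$ is secured with $c \in (\min(x,y),\max(x,y))$, so that $(y-c)^k \neq 0$, dividing out and solving for $M$ in the telescoped expression for $g'$ completes the argument.
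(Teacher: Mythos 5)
Your skeleton---the auxiliary function $g(t)=f(y)-\sum_{j=0}^{k}\frac{f^{(j)}(t)}{j!}(y-t)^{j}-M(y-t)^{k+1}$, the endpoint identities $g(x)=g(y)=0$, the telescoped derivative $g'(t)=\bigl((k+1)M-\frac{f^{(k+1)}(t)}{k!}\bigr)(y-t)^{k}$, and the plan to produce $c$ by a Rolle-type application of Theorem \ref{thmvt}---is correct algebra and is the same general route the paper indicates (the paper defers the actual proof to \cite{MultivWLUD}, remarking only that it uses the mean value theorem; the hypothesis that each $f^{(j)}$, $0\le j\le k$, be WLUD$^2$ is exactly what makes $g$ WLUD$^2$ via the closure properties of \cite{boo-sham-18}, as you observe). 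But the two difficulties you flag at the end are not loose ends to be smoothed over later: they are the entire non-Archimedean content of the theorem, and neither of your proposed repairs closes them.

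First, the nondegeneracy. Theorem \ref{thmvt} requires $g''(x)\neq 0$, and your perturbation-plus-limit plan fails in $\mathcal{N}$. As a preliminary matter, both perturbations you name destroy one of the two Rolle endpoints: changing $M$ ruins $g(x)=0$, while adding an infinitesimal multiple of $(t-x)^2$ ruins $g(y)=0$ (you would need a factor vanishing at both ends, such as $(t-x)^2(y-t)$). More fatally, the limiting step has nothing to support it: closed bounded intervals in $\mathcal{N}$ are not compact, since the order topology is totally disconnected and not locally compact, so the points $c_\epsilon$ produced by the perturbed mean value theorem form a family with no convergent subsequence and no reason to be Cauchy; the Cauchy completeness of $\mathcal{N}$ therefore buys nothing, and continuity of $f^{(k+1)}$ cannot be applied along a non-convergent family. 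All this argument yields is $\inf_{c}|g'(c)|=0$ on the interval, i.e.\ the approximate Taylor estimate, which is already immediate from Definition \ref{def:wludn} with exponent $k+1$; the theorem's content is \emph{exact} attainment. Note also that for $k=0$ the statement is precisely the mean value property for $f$ \emph{without} the hypothesis $f''(x)\neq 0$, so it strictly strengthens Theorem \ref{thmvt}; no argument that merely cites Theorem \ref{thmvt} can therefore succeed in the degenerate case, and a genuinely new idea is required there. Second, the uniformity in $y$: even when $g''(x)=-k(y-x)^{k-1}\bigl((k+1)M-\frac{f^{(k+1)}(x)}{k!}\bigr)-(y-x)^{k}\frac{f^{(k+2)}(x)}{k!}$ is nonzero, it depends on $y$ and is typically infinitesimal (the WLUD$^{k+1}$ estimate forces $M$ to be within $\epsilon$ of $f^{(k+1)}(x)/(k+1)!$ for $y$ in the corresponding $\delta_\epsilon$-interval), so the neighborhood $\Omega_y$ that Theorem \ref{thmvt} supplies for $g$ varies with $y$ and may shrink as $g''(x)$ degenerates; your argument needs the self-referential inclusion $y\in\Omega_y$, which cannot be extracted from the \emph{statement} of Theorem \ref{thmvt}---one must reopen its proof and track quantitatively how $\Omega$ depends on the WLUD$^2$ moduli and on $|g''(x_0)|$. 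Your sketch asserts this uniformity but does not establish it. In sum, the proposal reproduces the classical computation faithfully but leaves unproven exactly the two steps where the non-Archimedean work lies.
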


Before we define weak local uniform differentiability for functions from $\mathcal{N}^n$ to $\mathcal{N}^m$ and then state the inverse function theorem and the implicit function theorem, we introduce the following notations.

\begin{notation} Let $A\subset\mathcal{N}^n$ be open, let $\boldsymbol{x_0}\in A$ be given, and let $\boldsymbol{f}:A\rightarrow\mathcal{N}^m$ be such that all the first order partial derivatives of $\boldsymbol{f}$ at  $\boldsymbol{x_0}$ exist. Then $\boldsymbol{D}\boldsymbol{f}(\boldsymbol{x_0})$ denotes the linear map from $\mathcal{N}^n$ to $\mathcal{N}^m$ defined by the $m\times n$
Jacobian matrix of $\boldsymbol{f}$ at $\boldsymbol{x_0}$:
\[
\begin{pmatrix} \boldsymbol{f}^1_1(\boldsymbol{x_0}) & \boldsymbol{f}^1_2(\boldsymbol{x_0})& \ldots & \boldsymbol{f}^1_n(\boldsymbol{x_0})
\\ \boldsymbol{f}^2_1(\boldsymbol{x_0}) &
\boldsymbol{f}^2_2(\boldsymbol{x_0}) & \ldots & \boldsymbol{f}^2_n(\boldsymbol{x_0}) \\ \vdots & \vdots &\ddots & \vdots \\
\boldsymbol{f}^m_1(\boldsymbol{x_0}) & \boldsymbol{f}^m_2(\boldsymbol{x_0}) & \ldots & \boldsymbol{f}^m_n(\boldsymbol{x_0}) \end{pmatrix}
\]
with $\boldsymbol{f}^i_j(\boldsymbol{x_0})=\frac{\partial f_i}{\partial x_j}(\boldsymbol{x_0})$ for $1\le i\le m$ and $1\le j\le n$. Moreover, if $m=n$ then the determinant of the $n\times n$ matrix $\boldsymbol{D}\boldsymbol{f}(\boldsymbol{x_0})$ is denoted by $J\boldsymbol{f}(\boldsymbol{x_0})$.
\end{notation}
\begin{defn}[WLUD]\label{defWLUDnm}
Let $A\subset\mathcal{N}^n$ be open, let $\boldsymbol{f}:A \to \mathcal{N}^m$, and let $\boldsymbol{x_0}\in A$ be given.  Then we say that $\boldsymbol{f}$ is weakly
locally uniformly differentiable (WLUD) at $\boldsymbol{x_0}$ if $\boldsymbol{f}$ is differentiable in a neighborhood
$\Omega$ of $\boldsymbol{x_0}$ in $A$ and if for every $\epsilon>0$ in $\mathcal{N}$ there exists $\delta>0$ in $\mathcal{N}$ such that $B_{\delta}(\boldsymbol{x_0}):=\left\{\boldsymbol{t}\in\mathcal{N}:\left|\boldsymbol{t}-\boldsymbol{x_0}\right|<\delta\right\}\subset\Omega$, and
for all $\boldsymbol{x},\boldsymbol{y}\in B_{\delta}(\boldsymbol{x_0})$ we have that
\[
\left|\boldsymbol{f}(\boldsymbol{y}) - \boldsymbol{f}(\boldsymbol{x}) -
\boldsymbol{D}\boldsymbol{f}(\boldsymbol{x})(\boldsymbol{y} - \boldsymbol{x})\right| \le \epsilon \vert \boldsymbol{y} - \boldsymbol{x} \vert.
\]
Moreover, we say that $\boldsymbol{f}$ is WLUD on $A$ if $\boldsymbol{f}$ is WLUD at every point in $A$.
\end{defn}

We show in \cite{MultivWLUD} that if $\boldsymbol{f}$ is WLUD at $\boldsymbol{x_0}$ (respectively on $A$) then $\boldsymbol{f}$ is C$^1$ at $\boldsymbol{x_0}$ (respectively on $A$). Thus, the class of WLUD functions at a point $\boldsymbol{x_0}$ (respectively on an open set $A$) is a subset of
the class of
$C^1$ functions at $\boldsymbol{x_0}$ (respectively on $A$). However, this is still large enough to include all polynomial functions. We also show in \cite{MultivWLUD} that if $\boldsymbol{f},\boldsymbol{g}$ are WLUD at $\boldsymbol{x_0}$ (respectively on $A$) and if $\alpha\in\mathcal{N}$ then $\boldsymbol{f}+\alpha\boldsymbol{g}$ and $\boldsymbol{f}\cdot\boldsymbol{g}$ are WLUD at $\boldsymbol{x_0}$ (respectively on $A$). Moreover, we show that if $\boldsymbol{f}:A \to \mathcal{N}^m$ is WLUD at $\boldsymbol{x_0}\in A$ (respectively on $A$) and if $\boldsymbol{g}:C \to \mathcal{N}^p$ is WLUD at $\boldsymbol{f}(\boldsymbol{x_0})\in C$ (respectively on $C$),
where $A$ is an open subset of $\mathcal{N}^n$, $C$ an open subset of $\mathcal{N}^m$ and $\boldsymbol{f}(A) \subseteq C$, then
$\boldsymbol{g} \circ \boldsymbol{f}$ is WLUD at $\boldsymbol{x_0}$ (respectively on $A$).

\begin{theorem}[Inverse Function Theorem]\label{IFT}
Let $A\subset\mathcal{N}$ be open, let $\boldsymbol{g}:A\rightarrow\mathcal{N}^n$ be WLUD on $A$ and let $\boldsymbol{t_0}\in A$ be such that
$J\boldsymbol{g}(\boldsymbol{t_0})\neq0$. Then there is a neighborhood $\Omega$ of $\boldsymbol{t_0}$ such that:
\begin{enumerate}
\item $\boldsymbol{g}|_\Omega$ is one-to-one;
\item $\boldsymbol{g}(\Omega)$ is open;
\item the inverse $\boldsymbol{f}$ of $\boldsymbol{g}|_\Omega$ is WLUD on $\boldsymbol{g}(\Omega)$; and
$\boldsymbol{D}\boldsymbol{f}(\boldsymbol{x})=\left[\boldsymbol{D}\boldsymbol{g}(\boldsymbol{t})\right]^{-1}$ for $\boldsymbol{t}\in\Omega$ and
$\boldsymbol{x}=\boldsymbol{g}(\boldsymbol{t})$.
\end{enumerate}
\end{theorem}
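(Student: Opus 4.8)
The plan is to mimic the classical proof of the inverse function theorem, replacing the Euclidean estimates by the ultrametric (strong triangle) inequality and using the Cauchy completeness of $\mathcal{N}$ (hence of $\mathcal{N}^n$ with the max norm $|\boldsymbol{v}|=\max_i|v_i|$) in place of the completeness of $\mathbb{R}^n$. First I would record two consequences of the hypothesis $J\boldsymbol{g}(\boldsymbol{t_0})\neq0$. Since WLUD implies $C^1$, the map $\boldsymbol{t}\mapsto\boldsymbol{D}\boldsymbol{g}(\boldsymbol{t})$ is continuous, and as $J\boldsymbol{g}$ is a polynomial in its entries it too is continuous, so $J\boldsymbol{g}$ stays nonzero on a neighborhood $\Omega_0$ of $\boldsymbol{t_0}$. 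On $\mathcal{N}^n$ with the max norm the operator norm of a matrix $L=(L_{ij})$ equals $\max_{i,j}|L_{ij}|$, so by Cramer's rule the entries of $\left[\boldsymbol{D}\boldsymbol{g}(\boldsymbol{t})\right]^{-1}$ are continuous in $\boldsymbol{t}$ on $\Omega_0$; shrinking $\Omega_0$ if necessary I obtain a uniform bound $\left\|\left[\boldsymbol{D}\boldsymbol{g}(\boldsymbol{t})\right]^{-1}\right\|\le 1/m$, equivalently $\left|\boldsymbol{D}\boldsymbol{g}(\boldsymbol{t})\boldsymbol{v}\right|\ge m\left|\boldsymbol{v}\right|$ for all $\boldsymbol{v}\in\mathcal{N}^n$ and all $\boldsymbol{t}\in\Omega_0$, for some $m>0$.

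\emph{Injectivity.} Fix $\epsilon$ with $0<\epsilon<m$ and use the WLUD property of $\boldsymbol{g}$ at $\boldsymbol{t_0}$ to choose $\delta>0$ with $\Omega:=B_\delta(\boldsymbol{t_0})\subset\Omega_0$ on which $\left|\boldsymbol{g}(\boldsymbol{y})-\boldsymbol{g}(\boldsymbol{x})-\boldsymbol{D}\boldsymbol{g}(\boldsymbol{x})(\boldsymbol{y}-\boldsymbol{x})\right|\le\epsilon\left|\boldsymbol{y}-\boldsymbol{x}\right|$. For $\boldsymbol{x}\neq\boldsymbol{y}$ in $\Omega$ the linear term dominates, $\left|\boldsymbol{D}\boldsymbol{g}(\boldsymbol{x})(\boldsymbol{y}-\boldsymbol{x})\right|\ge m\left|\boldsymbol{y}-\boldsymbol{x}\right|>\epsilon\left|\boldsymbol{y}-\boldsymbol{x}\right|$, so the strong triangle inequality forces $\left|\boldsymbol{g}(\boldsymbol{y})-\boldsymbol{g}(\boldsymbol{x})\right|=\left|\boldsymbol{D}\boldsymbol{g}(\boldsymbol{x})(\boldsymbol{y}-\boldsymbol{x})\right|\ge m\left|\boldsymbol{y}-\boldsymbol{x}\right|>0$. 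This proves part (1) and, as a bonus, the lower bound $\left|\boldsymbol{g}(\boldsymbol{y})-\boldsymbol{g}(\boldsymbol{x})\right|\ge m\left|\boldsymbol{y}-\boldsymbol{x}\right|$ on $\Omega$ that I will reuse.

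\emph{Openness.} Given $\boldsymbol{t_1}\in\Omega$ with image $\boldsymbol{x_1}=\boldsymbol{g}(\boldsymbol{t_1})$, to realize a target $\boldsymbol{y}$ near $\boldsymbol{x_1}$ I would solve $\boldsymbol{g}(\boldsymbol{t})=\boldsymbol{y}$ by seeking a fixed point of $\boldsymbol{\Phi}(\boldsymbol{t})=\boldsymbol{t}+\left[\boldsymbol{D}\boldsymbol{g}(\boldsymbol{t_1})\right]^{-1}(\boldsymbol{y}-\boldsymbol{g}(\boldsymbol{t}))$. Writing $\boldsymbol{\Phi}(\boldsymbol{s})-\boldsymbol{\Phi}(\boldsymbol{t})=-\left[\boldsymbol{D}\boldsymbol{g}(\boldsymbol{t_1})\right]^{-1}\bigl(\boldsymbol{g}(\boldsymbol{s})-\boldsymbol{g}(\boldsymbol{t})-\boldsymbol{D}\boldsymbol{g}(\boldsymbol{t_1})(\boldsymbol{s}-\boldsymbol{t})\bigr)$ and combining the WLUD estimate with the continuity of $\boldsymbol{D}\boldsymbol{g}$ (so that $\boldsymbol{D}\boldsymbol{g}(\boldsymbol{t})$ is uniformly close to $\boldsymbol{D}\boldsymbol{g}(\boldsymbol{t_1})$ on a small ball), the strong triangle inequality yields $\left|\boldsymbol{\Phi}(\boldsymbol{s})-\boldsymbol{\Phi}(\boldsymbol{t})\right|\le C\left|\boldsymbol{s}-\boldsymbol{t}\right|$ with $C<1$. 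Choosing $\boldsymbol{y}$ close enough to $\boldsymbol{x_1}$ makes $\left|\boldsymbol{\Phi}(\boldsymbol{t_1})-\boldsymbol{t_1}\right|=\left|\left[\boldsymbol{D}\boldsymbol{g}(\boldsymbol{t_1})\right]^{-1}(\boldsymbol{y}-\boldsymbol{x_1})\right|$ small, so $\boldsymbol{\Phi}$ maps a closed ball about $\boldsymbol{t_1}$ into itself; here the ultrametric makes the self-mapping step immediate since $\left|\boldsymbol{\Phi}(\boldsymbol{t})-\boldsymbol{t_1}\right|\le\max\bigl(C\left|\boldsymbol{t}-\boldsymbol{t_1}\right|,\left|\boldsymbol{\Phi}(\boldsymbol{t_1})-\boldsymbol{t_1}\right|\bigr)$. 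The iterates of $\boldsymbol{\Phi}$ then form a Cauchy sequence, and Cauchy completeness of $\mathcal{N}^n$ delivers a fixed point $\boldsymbol{t}\in\Omega$ with $\boldsymbol{g}(\boldsymbol{t})=\boldsymbol{y}$; hence $\boldsymbol{g}(\Omega)$ contains a ball about each of its points and is open.

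\emph{The inverse is WLUD.} Let $\boldsymbol{f}=(\boldsymbol{g}|_\Omega)^{-1}$; the lower bound from the injectivity step shows $\boldsymbol{f}$ is Lipschitz with constant $1/m$, hence continuous. For $\boldsymbol{x}=\boldsymbol{g}(\boldsymbol{t})$ and $\boldsymbol{x}'=\boldsymbol{g}(\boldsymbol{t}')$, substituting $\boldsymbol{x}'-\boldsymbol{x}=\boldsymbol{D}\boldsymbol{g}(\boldsymbol{t})(\boldsymbol{t}'-\boldsymbol{t})+\boldsymbol{R}$ into $\left[\boldsymbol{D}\boldsymbol{g}(\boldsymbol{t})\right]^{-1}(\boldsymbol{x}'-\boldsymbol{x})$ gives $\boldsymbol{f}(\boldsymbol{x}')-\boldsymbol{f}(\boldsymbol{x})-\left[\boldsymbol{D}\boldsymbol{g}(\boldsymbol{t})\right]^{-1}(\boldsymbol{x}'-\boldsymbol{x})=-\left[\boldsymbol{D}\boldsymbol{g}(\boldsymbol{t})\right]^{-1}\boldsymbol{R}$, whose norm is at most $(M/m)\epsilon\left|\boldsymbol{x}'-\boldsymbol{x}\right|$ after using $\left|\boldsymbol{R}\right|\le\epsilon\left|\boldsymbol{t}'-\boldsymbol{t}\right|$ and $\left|\boldsymbol{t}'-\boldsymbol{t}\right|\le(1/m)\left|\boldsymbol{x}'-\boldsymbol{x}\right|$, where $M=\sup\left\|\left[\boldsymbol{D}\boldsymbol{g}\right]^{-1}\right\|$ on $\Omega$. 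Since $\epsilon$ may be taken arbitrarily small, and the continuity of $\boldsymbol{f}$ lets me pull the corresponding $\boldsymbol{x}$-ball back into the $\boldsymbol{t}$-ball where the WLUD estimate for $\boldsymbol{g}$ holds, this proves $\boldsymbol{f}$ is WLUD on $\boldsymbol{g}(\Omega)$ with $\boldsymbol{D}\boldsymbol{f}(\boldsymbol{x})=\left[\boldsymbol{D}\boldsymbol{g}(\boldsymbol{t})\right]^{-1}$. I expect the main obstacle to be the openness step: one must certify both the contraction constant $C<1$ and the self-mapping property uniformly and then invoke Cauchy completeness correctly. Once the ultrametric is exploited (operator norm equals the largest entry, and $|a+b|=\max(|a|,|b|)$ whenever $|a|\neq|b|$) the remaining estimates are routine.
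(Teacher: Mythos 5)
This theorem is quoted in the paper without proof (it is recalled from \cite{MultivWLUD}), so your proposal can only be judged on its own merits; on those merits it contains one genuine gap, located exactly where you predicted the main obstacle would be. Your fixed-point step concludes from a contraction constant $C<1$ that the Picard iterates of $\boldsymbol{\Phi}$ form a Cauchy sequence. In $\mathcal{N}$ this inference fails: convergence is in the order topology, which coincides with the valuation topology, so a sequence tends to $0$ if and only if the $\lambda$-values of its terms tend to $\infty$. If $C$ is, say, the real number $1/2$, then $\lambda(C^k\vert\boldsymbol{t_1}-\boldsymbol{t_0}\vert)=\lambda(\vert\boldsymbol{t_1}-\boldsymbol{t_0}\vert)$ for all $k$, the increments never drop below an infinitesimal threshold, and indeed the geometric series $\sum_k 2^{-k}$ diverges in $\mathcal{N}$ (its partial sums $2-2^{-k}$ are not Cauchy in the order topology). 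So ``$C<1$ plus completeness'' does not deliver a fixed point. The repair is available but must be made explicit, and it is the non-Archimedean heart of the argument: since the WLUD definition quantifies over \emph{all} $\epsilon>0$ in $\mathcal{N}$, and continuity of $\boldsymbol{D}\boldsymbol{g}$ likewise allows an infinitesimal tolerance, you may choose $\epsilon$ and $\epsilon'$ so small relative to $m$ that $C\ll 1$, i.e. $\lambda(C)>0$; then $\lambda\left(C^k\vert\boldsymbol{t_1}-\boldsymbol{t_0}\vert\right)=k\lambda(C)+\lambda(\vert\boldsymbol{t_1}-\boldsymbol{t_0}\vert)\rightarrow\infty$, the iterates are genuinely Cauchy, and Cauchy completeness applies. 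Calling this step ``routine'' is precisely where the proposal breaks.

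A second, related error is that you treat $\vert\cdot\vert$ as an ultrametric norm. The absolute value in the WLUD estimates is the order absolute value $\vert x\vert=\max\{x,-x\}$ (extended to $\mathcal{N}^n$), which satisfies only the ordinary triangle inequality: $\vert 1+\tfrac12\vert=\tfrac32\neq\max\{1,\tfrac12\}$. The strong triangle inequality and the isosceles property hold for the valuation $\lambda$, i.e. $\vert a+b\vert$ has the same order of magnitude as $a$ only when $\vert b\vert\ll\vert a\vert$, not merely when $\vert b\vert<\vert a\vert$. Consequently your claims ``$\vert\boldsymbol{g}(\boldsymbol{y})-\boldsymbol{g}(\boldsymbol{x})\vert=\vert\boldsymbol{D}\boldsymbol{g}(\boldsymbol{x})(\boldsymbol{y}-\boldsymbol{x})\vert$'', ``operator norm equals the largest entry'', and the max-form self-mapping inequality are all false as stated. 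These are repairable: the ordinary reverse triangle inequality gives $\vert\boldsymbol{g}(\boldsymbol{y})-\boldsymbol{g}(\boldsymbol{x})\vert\ge(m-\epsilon)\vert\boldsymbol{y}-\boldsymbol{x}\vert$, the operator norm of a matrix in the max norm is within a factor $n$ of its largest entry (and $n$ is a harmless finite constant), and the self-mapping step goes through with $Cr+\vert\boldsymbol{\Phi}(\boldsymbol{t_1})-\boldsymbol{t_1}\vert\le r$. With those substitutions, your overall architecture --- injectivity from the two-point uniformity built into the WLUD definition, openness via a fixed-point iteration, and the Lipschitz bound on the inverse feeding the WLUD estimate with derivative $\left[\boldsymbol{D}\boldsymbol{g}(\boldsymbol{t})\right]^{-1}$ --- is sound and is the expected line of proof; only the convergence mechanism, which must run through $\lambda$ rather than through a real contraction ratio, was missing.
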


As in the real case, the inverse function theorem is used to prove the implicit function theorem. But before we state the implicit function theorem, we introduce the following notations.
\begin{notation}Let $A\subseteq\mathcal{N}^n$ be open and let $\boldsymbol{\Phi}: A\rightarrow\mathcal{N}^m$ be WLUD on $A$. For $\boldsymbol{t}=(t_1,...,t_{n-m},t_{n-m+1},...,t_{n} )\in A$, let
\begin{equation*}
\hat{\boldsymbol{t}}=(t_1,...,t_{n-m})\text{ and }\tilde{J}\boldsymbol{\Phi}(\boldsymbol{t})=\det\left(\dfrac{\partial(\Phi_1,...,\Phi_m)}{\partial(t_{n-m+1},...,t_{n})}\right).
\end{equation*}
\end{notation}

\begin{theorem}[Implicit Function Theorem]
Let $\boldsymbol{\Phi}:A\rightarrow\mathcal{N}^m$ be WLUD on $A$, where $A\subseteq\mathcal{N}^n$ is open and $1\leq m<n.$
Let $\boldsymbol{t_0}\in A$ be such that $\boldsymbol{\Phi}(\boldsymbol{t_0})=\boldsymbol{0}$ and
$\tilde{J}\boldsymbol{\Phi}(\boldsymbol{t_0})\neq0$. Then there exist a neighborhood $U$ of $\boldsymbol{t_0}$, a neighborhood $R$ of
$\hat{\boldsymbol{t_0}}$ and $\boldsymbol{\phi}:R\rightarrow\mathcal{N}^m$ that is WLUD on $R$ such that
\[
\tilde{J}\boldsymbol{\Phi}(\boldsymbol{t})\neq0
 \text{ for all } \boldsymbol{t}\in U,
 \]
 and
\begin{equation*}
\{\boldsymbol{t}\in
U:\boldsymbol{\Phi}(\boldsymbol{t})=\boldsymbol{0}\}=\{(\hat{\boldsymbol{t}},\boldsymbol{\phi}(\hat{\boldsymbol{t}})):\hat{\boldsymbol{t}}\in
R\}.
\end{equation*}
\end{theorem}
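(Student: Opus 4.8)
The plan is to reduce the implicit function theorem to the Inverse Function Theorem (Theorem \ref{IFT}) via the classical device of augmenting $\boldsymbol{\Phi}$ with the free coordinates. Writing $\boldsymbol{t}=(\hat{\boldsymbol{t}},\check{\boldsymbol{t}})$ with $\hat{\boldsymbol{t}}=(t_1,\dots,t_{n-m})$ and $\check{\boldsymbol{t}}=(t_{n-m+1},\dots,t_n)$, I would define $\boldsymbol{G}:A\to\mathcal{N}^n$ by $\boldsymbol{G}(\boldsymbol{t})=(\hat{\boldsymbol{t}},\boldsymbol{\Phi}(\boldsymbol{t}))$. Writing $\boldsymbol{G}=\boldsymbol{P}+\boldsymbol{\iota}\circ\boldsymbol{\Phi}$, where $\boldsymbol{P}(\boldsymbol{t})=(\hat{\boldsymbol{t}},\boldsymbol{0})$ and $\boldsymbol{\iota}(\boldsymbol{w})=(\boldsymbol{0},\boldsymbol{w})$ are linear, one sees that $\boldsymbol{G}$ is WLUD on $A$ from the closure of WLUD functions under composition and addition recalled above. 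A direct computation shows that $\boldsymbol{D}\boldsymbol{G}(\boldsymbol{t})$ is block lower-triangular, with an $(n-m)\times(n-m)$ identity block in the upper-left corner and the $m\times m$ block $\partial(\Phi_1,\dots,\Phi_m)/\partial(t_{n-m+1},\dots,t_n)$ in the lower-right corner, so that $J\boldsymbol{G}(\boldsymbol{t})=\tilde{J}\boldsymbol{\Phi}(\boldsymbol{t})$; in particular $J\boldsymbol{G}(\boldsymbol{t_0})=\tilde{J}\boldsymbol{\Phi}(\boldsymbol{t_0})\neq0$.

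Next I would apply Theorem \ref{IFT} to $\boldsymbol{G}$ at $\boldsymbol{t_0}$, obtaining a neighborhood $\Omega$ of $\boldsymbol{t_0}$ on which $\boldsymbol{G}$ is one-to-one, with $\boldsymbol{G}(\Omega)$ open and inverse $\boldsymbol{F}=(\boldsymbol{G}|_\Omega)^{-1}$ WLUD on $\boldsymbol{G}(\Omega)$. Because $\boldsymbol{\Phi}$ is WLUD it is $C^1$, so $\tilde{J}\boldsymbol{\Phi}$ is continuous and nonvanishing at $\boldsymbol{t_0}$; shrinking $\Omega$ to a neighborhood $U$ of $\boldsymbol{t_0}$ on which $\tilde{J}\boldsymbol{\Phi}\neq0$ yields the first assertion. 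The decisive structural observation is that, since $\boldsymbol{G}$ leaves the first $n-m$ coordinates unchanged, so does its inverse: $\boldsymbol{F}(\hat{\boldsymbol{s}},\check{\boldsymbol{s}})=(\hat{\boldsymbol{s}},\boldsymbol{\psi}(\hat{\boldsymbol{s}},\check{\boldsymbol{s}}))$ for some WLUD map $\boldsymbol{\psi}$ given by the last $m$ components of $\boldsymbol{F}$.

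I would then define $\boldsymbol{\phi}(\hat{\boldsymbol{t}})=\boldsymbol{\psi}(\hat{\boldsymbol{t}},\boldsymbol{0})$, which is WLUD as the composition of $\boldsymbol{F}$ with the affine (hence WLUD) embedding $\hat{\boldsymbol{t}}\mapsto(\hat{\boldsymbol{t}},\boldsymbol{0})$, on the neighborhood $R$ of $\hat{\boldsymbol{t_0}}$ consisting of those $\hat{\boldsymbol{t}}$ for which $(\hat{\boldsymbol{t}},\boldsymbol{0})\in\boldsymbol{G}(\Omega)$ and $(\hat{\boldsymbol{t}},\boldsymbol{\phi}(\hat{\boldsymbol{t}}))\in U$. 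The set equality then follows by tracing the equivalence $\boldsymbol{\Phi}(\boldsymbol{t})=\boldsymbol{0}\iff\boldsymbol{G}(\boldsymbol{t})=(\hat{\boldsymbol{t}},\boldsymbol{0})$: for $\boldsymbol{t}\in U$ with $\boldsymbol{\Phi}(\boldsymbol{t})=\boldsymbol{0}$, applying $\boldsymbol{F}$ gives $\boldsymbol{t}=(\hat{\boldsymbol{t}},\boldsymbol{\phi}(\hat{\boldsymbol{t}}))$, while conversely, for $\hat{\boldsymbol{t}}\in R$ the point $(\hat{\boldsymbol{t}},\boldsymbol{\phi}(\hat{\boldsymbol{t}}))$ satisfies $\boldsymbol{G}(\hat{\boldsymbol{t}},\boldsymbol{\phi}(\hat{\boldsymbol{t}}))=(\hat{\boldsymbol{t}},\boldsymbol{0})$, i.e.\ $\boldsymbol{\Phi}$ vanishes there.

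I expect the main obstacle to be the bookkeeping of the nested neighborhoods $\Omega\supseteq U$ and $R$ so that both inclusions of the final set equality hold simultaneously, and in particular ensuring that the graph $\{(\hat{\boldsymbol{t}},\boldsymbol{\phi}(\hat{\boldsymbol{t}})):\hat{\boldsymbol{t}}\in R\}$ stays inside $U$ while every zero of $\boldsymbol{\Phi}$ in $U$ projects into $R$. The non-Archimedean total disconnectedness, which ordinarily complicates such openness and continuity arguments, is already absorbed into Theorem \ref{IFT} and into the fact that WLUD functions are $C^1$, so no separate topological difficulty should arise beyond this careful choice of neighborhoods.
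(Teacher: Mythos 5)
Your proposal is correct and takes essentially the same route the paper intends: the paper states this theorem without proof (deferring to \cite{MultivWLUD}) but says explicitly that, as in the real case, the inverse function theorem is used to prove it, and your augmentation $\boldsymbol{G}(\boldsymbol{t})=(\hat{\boldsymbol{t}},\boldsymbol{\Phi}(\boldsymbol{t}))$ with $J\boldsymbol{G}(\boldsymbol{t})=\tilde{J}\boldsymbol{\Phi}(\boldsymbol{t})$, followed by Theorem \ref{IFT} and the observation that the inverse fixes the first $n-m$ coordinates, is precisely that classical reduction. Your handling of the nested neighborhoods $\Omega\supseteq U$ and of $R$ (openness via continuity of $\boldsymbol{\phi}$ and of $\tilde{J}\boldsymbol{\Phi}$, both consequences of WLUD implying $C^1$) is sound, so nothing further is needed.
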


\section{New Results}
This paper is a continuation of the work done in
\cite{boo-sham-18,MultivWLUD}. In the following section, we will generalize in Definition \ref{defWLUDn1k} and Definition \ref{defWLUDn1infty} the concepts of WLUD$^k$ and WLUD$^\infty$ to functions from $\mathcal{N}^n$ to $\mathcal{N}$; and we will formulate (in Theorem \ref{thmtaylorseries1} and Theorem \ref{thmtaylorseriesn} and their proofs) conditions under which a WLUD$^\infty$ $\mathcal{N}$-valued function at a point $x_0\in \mathcal{N}$ or a WLUD$^\infty$ $\mathcal{N}$-valued function at a point $\boldsymbol{x_0} \in \mathcal{N}^n$ will be analytic at that point.

\begin{theorem}\label{thmtaylorseries1}
Let $A \subseteq \mathcal{N}$ be open, let $x_0\in A$, and let $f : A \rightarrow \mathcal{N}$ be WLUD$^{\infty}$ at $x_0$. For each $k\in\mathbb{N}$, let $\delta_k>0$ in $\mathcal{N}$ correspond to $\epsilon=1$ in Definition \ref{def:wludn}. Assume that
\[
\limsup_{j\rightarrow\infty}\left(\frac{-\lambda\left(f^{(j)}(x_0)\right)}{j}\right)<\infty \text{ and }
\limsup _{k\rightarrow\infty}\lambda\left(\delta_k\right)<\infty.
\]
Then there exists a neighborhood $U$ of $x_0$ in $A$ such that,  for any $x,y \in U$, we have that
\[
f(y)=\sum_{j=0}^{\infty} \frac{f^{(j)}\left(x\right)}{j!}\left(y-x\right)^{j}.
\]
That is, the Taylor series $\sum\limits_{j=0}^{\infty} \frac{f^{(j)}\left(x\right)}{j!}\left(y-x\right)^{j}$ converges in $\mathcal{N}$ to $f(y)$; and hence $f$ is analytic in $U$.
\end{theorem}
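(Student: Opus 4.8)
The plan is to track the Taylor remainder
$R_k(x,y):=f(y)-\sum_{j=0}^{k}\frac{f^{(j)}(x)}{j!}(y-x)^{j}$
through the valuation $\lambda$, exploiting that, since $\mathcal{N}$ is Cauchy complete in the order topology (which coincides with the valuation topology induced by $\Lambda(x,y)=\exp(-\lambda(x-y))$), a sequence converges iff its valuation tends to $+\infty$, and a series converges iff its general term does so. Thus I want to exhibit a single interval $U$ around $x_0$ on which, uniformly for $x,y\in U$, both $\lambda\!\left(R_k(x,y)\right)\to+\infty$ and $\lambda\!\left(\frac{f^{(j)}(x)}{j!}(y-x)^{j}\right)\to+\infty$; the $k$-th partial sum of the Taylor series is exactly $f(y)-R_k(x,y)$, so $R_k\to 0$ simultaneously gives convergence and pins the sum at $f(y)$.

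First I would use the second hypothesis to manufacture this fixed neighborhood. Since $\limsup_{k}\lambda(\delta_k)<\infty$, there are $M\in\mathbb{R}$ and $K_0\in\mathbb{N}$ with $\lambda(\delta_k)\le M$ for all $k\ge K_0$. Because $\mathcal{N}$ is real closed, its Hahn group $S_{\mathcal{N}}$ is a nontrivial divisible, hence dense and unbounded, subgroup of $\mathbb{R}$, so I may choose $s\in S_{\mathcal{N}}$ with $s>\max\{M,C\}$, where $C:=\limsup_{j\to\infty}\left(-\lambda(f^{(j)}(x_0))/j\right)<\infty$ by the first hypothesis. Setting $\delta^{\ast}=d^{\,s}$ and $U=(x_0-\delta^{\ast},x_0+\delta^{\ast})$, the inequality $\lambda(\delta_k)\le M<s=\lambda(\delta^{\ast})$ forces $\delta^{\ast}<\delta_k$, hence $U\subset(x_0-\delta_k,x_0+\delta_k)$ for every $k\ge K_0$. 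Consequently $f$ is $C^{\infty}$ on $U$, each $f^{(j)}(x)$ is defined for $x\in U$, and the ultrametric inequality gives $\lambda(y-x)\ge\min\{\lambda(y-x_0),\lambda(x-x_0)\}\ge s$ for all $x,y\in U$.

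Next I would feed this into the WLUD$^{k}$ estimate with $\epsilon=1$. Since $U\subset(x_0-\delta_k,x_0+\delta_k)$, Definition \ref{def:wludn} yields $\abs{R_k(x,y)}\le\abs{y-x}^{k}$ for all $x,y\in U$ and $k\ge K_0$, so $\lambda(R_k(x,y))\ge k\,\lambda(y-x)\ge ks\to+\infty$; therefore $R_k\to 0$ and the partial sums converge to $f(y)$. In parallel, the first hypothesis guarantees a priori convergence of the series at the center: from $-\lambda(f^{(j)}(x_0))\le Cj$ for large $j$ and $\lambda(j!)=0$ (as $j!\in\mathbb{R}\subset\mathcal{N}$), one gets $\lambda\!\left(\frac{f^{(j)}(x_0)}{j!}(y-x_0)^{j}\right)\ge j\,(s-C)\to+\infty$, so the Taylor series converges at $x_0$, and the remainder estimate transports both convergence and the value $f(y)$ to every center $x\in U$.

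The hard part will be passing from the distinguished point $x_0$ to an arbitrary center $x\in U$: the first hypothesis constrains only the derivatives $f^{(j)}(x_0)$, whereas analyticity on $U$ demands control of $f^{(j)}(x)$ for every $x\in U$. The cleanest route is to let the remainder carry this weight, since $\frac{f^{(j)}(x)}{j!}(y-x)^{j}=R_{j-1}(x,y)-R_j(x,y)$ has $\lambda\ge(j-1)s\to+\infty$, so term decay at a general center follows from the WLUD$^{k}$ inequality rather than from the first hypothesis directly. The genuine obstacle is therefore the $k$-dependence of the intervals $(x_0-\delta_k,x_0+\delta_k)$: absent the assumption $\limsup_{k}\lambda(\delta_k)<\infty$ they could contract so rapidly—their valuations diverging—that no fixed interval sits inside all of them, and the uniform remainder bound on a true neighborhood would fail. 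The remaining checks, namely that the chosen $s$ indeed gives $\delta^{\ast}<\delta_k$ and that $\lambda$-inequalities convert correctly into order inequalities for $\abs{\cdot}$, are routine and I would dispatch them directly.
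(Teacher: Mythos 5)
Your proof is correct, and it takes a genuinely different (and shorter) route than the paper's, so a comparison is in order. One small repair first: you must choose $s>\max\{M,C,0\}$, not merely $s>\max\{M,C\}$, since $\lambda(y-x)\ge s$ only forces $\abs{y-x}^k\to 0$ when $s$ is strictly positive (the paper guards against exactly this by requiring $\lambda(\delta)>\max\{\lambda_0,t,0\}$); as you can take $s$ arbitrarily large this is cosmetic. The substantive difference is how the two arguments pass from the distinguished point $x_0$ to an arbitrary center $x\in U$. The paper instantiates the WLUD$^k$ estimate only at the center $x_0$, obtaining $f(x)=\sum_{l=0}^{\infty}\frac{f^{(l)}(x_0)}{l!}(x-x_0)^l$ on $U$, and then does real work: it proves by induction (with an explicit difference-quotient computation) that this power series may be differentiated term by term, giving equation (\ref{eqtaylor1:4}), and then re-expands $f(y)$ about $x$ via the binomial theorem and an interchange of the order of summation, justified by citing rearrangement results for convergence in the valuation topology. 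You instead exploit the full two-point uniformity already built into Definition \ref{def:wludn}: the $\epsilon=1$ inequality bounds the remainder $R_k(x,y)$ for \emph{every} pair $x,y$ in the $\delta_k$-interval, with the Taylor polynomial centered at $x$; once your fixed $U=(x_0-d^s,x_0+d^s)$ lies inside all $\delta_k$-intervals for $k\ge K_0$, the bound $\lambda(R_k(x,y))\ge ks\to\infty$ kills the remainder at every center simultaneously, and since the $k$-th partial sum equals $f(y)-R_k(x,y)$, convergence of the series and the identification of its sum with $f(y)$ come for free --- no term-by-term differentiation, no double-sum interchange. Your route also exposes a structural fact the paper's proof obscures: the first hypothesis is never genuinely used (you invoke $C$ only to pad the choice of $s$). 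Indeed, writing $\frac{f^{(j)}(x_0)}{j!}(y-x_0)^j=R_{j-1}(x_0,y)-R_j(x_0,y)$ and taking $y$ with $\lambda(y-x_0)=s$ shows $\lambda\left(f^{(j)}(x_0)\right)\ge -s$ for large $j$, so the condition $\limsup_{j\to\infty}\left(-\lambda\left(f^{(j)}(x_0)\right)/j\right)<\infty$ is actually a consequence of WLUD$^\infty$ together with the second hypothesis. What the paper's longer route buys in exchange is extra information beyond the theorem's statement: $f$ coincides on all of $U$ with a single power series centered at $x_0$, whose formally differentiated series converge and represent $f^{(j)}(x)$ throughout $U$ --- facts of independent interest that your remainder-only argument does not produce.
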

\begin{proof}
Let
\[
\lambda_0=\limsup_{j\rightarrow\infty}\left(\frac{-\lambda\left(f^{(j)}(x_0)\right)}{j}\right).
\]
Then $\lambda_0\in\mathbb{R}$ and $\lambda_0<\infty$; and, by \cite[Page 59]{schikhofbook}, we have that $\sum\limits_{j=0}^{\infty} \frac{f^{(j)}\left(x_0\right)}{j!}\left(x-x_0\right)^{j}$ converges in $\mathcal{N}$ for all $x\in \mathcal{N}$ satisfying $\lambda(x-x_0)>\lambda_0$.

For all $k\in \mathbb{N}$, we have that $(x_0-\delta_k, x_0+\delta_k)\subset A$, $f$ is $k$ times differentiable on $(x_0-\delta_k, x_0+\delta_k)$, and
 \[
 \left|f(x)-\sum\limits_{j=0}^{k} \frac{f^{(j)}\left(x_0\right)}{j!}\left(x-x_0\right)^{j}\right|\le \left|x-x_0\right|^k \text{ for all }x\in (x_0-\delta_k, x_0+\delta_k).
\]
 Since $\limsup\limits _{k\rightarrow\infty}\lambda\left(\delta_k\right)<\infty$, there exists $t>0$ in $\mathbb{Q}$ such that $\limsup\limits _{k\rightarrow\infty}\lambda\left(\delta_k\right)<t<\infty$. Thus, there exists $N\in\mathbb{N}$ such that
 \begin{equation}\label{eqtaylor1:1}
  \lambda(\delta_k)<t\text{ for all }k>N.
  \end{equation}
  Let
  $\delta>0$ in $\mathcal{N}$ be such that $\lambda(\delta)>\max\{\lambda_0, t,0\}$; this is possible since $\max\{\lambda_0, t,0\}<\infty$. It follows from (\ref{eqtaylor1:1})
   that $\lambda(\delta)>\lambda(\delta_k)$ and hence $0<\delta\ll\delta_k$ for all $k>N$. Thus,
  $(x_0-\delta, x_0+\delta)\subset A$, $f$ is infinitely often differentiable on $(x_0-\delta, x_0+\delta)$, and
 \begin{equation}\label{eqtaylor1:2}
 \left|f(x)-\sum\limits_{j=0}^{k} \frac{f^{(j)}\left(x_0\right)}{j!}\left(x-x_0\right)^{j}\right|\le \left|x-x_0\right|^k \forall\ x\in (x_0-\delta, x_0+\delta)\text{ and }\forall\ k>N.
\end{equation}
Moreover, for all $x\in (x_0-\delta, x_0+\delta)$, we have that $\lambda(x-x_0)\ge \lambda(\delta)>\lambda_0$ and hence $\sum\limits_{j=0}^{\infty} \frac{f^{(j)}\left(x_0\right)}{j!}\left(x-x_0\right)^{j}$ converges in $\mathcal{N}$. Let $U=(x_0-\delta, x_0+\delta)$.

First we show that
\[
f(x)=\sum_{j=0}^{\infty} \frac{f^{(j)}\left(x_0\right)}{j!}\left(x-x_0\right)^{j}\text{ for all }x\in U.
\]
Let $x\in U$ be given. Taking the limit in (\ref{eqtaylor1:2}) as $k\rightarrow\infty$, we get:
\[
0\le\lim_{k\rightarrow\infty}\left|f(x)-\sum\limits_{j=0}^{k} \frac{f^{(j)}\left(x_0\right)}{j!}\left(x-x_0\right)^{j}\right|\le \lim_{k\rightarrow\infty} \left|x-x_0\right|^k,
\]
from which we obtain
\[
0\le\left|f(x)-\lim_{k\rightarrow\infty}\sum\limits_{j=0}^{k} \frac{f^{(j)}\left(x_0\right)}{j!}\left(x-x_0\right)^{j}\right|\le \lim_{k\rightarrow\infty} \left|x-x_0\right|^k.
\]
Since $\lambda(x-x_0)\ge \lambda(\delta)>0$, we obtain that
$\lim\limits_{k\rightarrow\infty} \left|x-x_0\right|^k=0$.
It follows that
\[
0\le\left|f(x)-\sum\limits_{j=0}^{\infty} \frac{f^{(j)}\left(x_0\right)}{j!}\left(x-x_0\right)^{j}\right|\le 0
\]
from which we infer that
$f(x)=\sum\limits_{j=0}^{\infty} \frac{f^{(j)}\left(x_0\right)}{j!}\left(x-x_0\right)^{j}$ or, equivalently,
\begin{equation}\label{eqtaylor1:3}
f(x)=\sum\limits_{l=0}^{\infty} \frac{f^{(l)}\left(x_0\right)}{l!}\left(x-x_0\right)^{l}.
\end{equation}
Since the convergence of the Taylor series above is in the order (valuation) topology, we will show that the derivatives of $f$ at $x$ to any order are obtained by differentiating the power series in Equation (\ref{eqtaylor1:3}) term by term. That is, for all $j\in\mathbb{N}$,
\begin{equation}\label{eqtaylor1:4}
f^{(j)}(x)=\sum_{l=j}^{\infty} l(l-1)\ldots(l-j+1)\frac{f^{(l)}\left(x_0\right)}{l!}\left(x-x_0\right)^{l-j}.
\end{equation}
First note that, since $\lambda\left(l(l-1)\ldots(l-j+1)\right)=0$, it follows that $\sum_{l=j}^{\infty} l(l-1)\ldots(l-j+1)\frac{f^{(l)}\left(x_0\right)}{l!}\left(x-x_0\right)^{l-j}$ converges in $\mathcal{N}$ for all $j\in\mathbb{N}$. Using induction on $j$, it suffices to show that
\[
f^\prime(x)= \sum_{l=1}^{\infty} l\frac{f^{(l)}\left(x_0\right)}{l!}\left(x-x_0\right)^{l-1}=\sum_{l=1}^{\infty} \frac{f^{(l)}\left(x_0\right)}{(l-1)!}\left(x-x_0\right)^{l-1}.
\]
Let $h\in \mathcal{N}$ be such that $x+h\in U$. We will show that
\[
\lim_{h\rightarrow0}\left\{\frac{f(x+h)-f(x)}{h}\right\}=\sum_{l=1}^{\infty} \frac{f^{(l)}\left(x_0\right)}{(l-1)!}\left(x-x_0\right)^{l-1}.
\]
Thus,
\begin{eqnarray*}
&&\lim_{h\rightarrow0}\left\{\frac{f(x+h)-f(x)}{h}\right\}=\lim_{h\rightarrow0}\left\{\sum\limits_{l=0}^{\infty}\frac{f^{(l)}\left(x_0\right)}{l!}\frac{ \left(x+h-x_0\right)^{l}-\left(x-x_0\right)^{l}}{h}\right\}\\
&=&\lim_{h\rightarrow0}\left\{\sum\limits_{l=1}^{\infty}\frac{f^{(l)}\left(x_0\right)}{l!}\frac{ \left(x+h-x_0\right)^{l}-\left(x-x_0\right)^{l}}{h}\right\}\\
&=&\lim_{h\rightarrow0}\left\{\sum\limits_{l=1}^{\infty}\frac{f^{(l)}\left(x_0\right)}{l!} \left[(x+h-x_0)^{l-1}+(x+h-x_0)^{l-2}(x-x_0)+\cdots+(x-x_0)^{l-1}\right]\right\}\\
&=&\sum\limits_{l=1}^{\infty}\frac{f^{(l)}\left(x_0\right)}{l!} \left[l(x-x_0)^{l-1}\right]\\
&=&\sum_{l=1}^{\infty} \frac{f^{(l)}\left(x_0\right)}{(l-1)!}\left(x-x_0\right)^{l-1}.
\end{eqnarray*}

Now let $y\in U$ be given. Then
\begin{eqnarray*}
f(y)&=&\sum_{l=0}^{\infty} \frac{f^{(l)}\left(x_0\right)}{l!}\left(y-x_0\right)^{l}\\
&=&\sum_{l=0}^{\infty} \frac{f^{(l)}\left(x_0\right)}{l!}\left[(y-x)+(x-x_0)\right]^{l}\\
&=&\sum_{l=0}^{\infty} \sum_{j=0}^{l} \frac{f^{(l)}\left(x_0\right)}{l!}\left(\begin{array}{c}l\\j\end{array}\right) (y-x)^j(x-x_0)^{l-j}\\
&=&\sum_{l=0}^{\infty} \sum_{j=0}^{l} \frac{l(l-1)\ldots(l-j+1)}{j!}\frac{f^{(l)}\left(x_0\right)}{l!}(x-x_0)^{l-j}(y-x)^j.
\end{eqnarray*}
Since convergence in the order topology (valuation topology) entails absolute convergence, we can interchange the order of the summations in the last equality \cite{shamseddinephd,rspsio00}. We get:
\begin{eqnarray*}
f(y)&=&\sum_{j=0}^{\infty}\frac{1}{j!}\left[ \sum_{l=j}^{\infty}l(l-1)\ldots(l-j+1)\frac{f^{(l)}\left(x_0\right)}{l!}(x-x_0)^{l-j}\right](y-x)^j\\
&=&\sum_{j=0}^{\infty}\frac{f^{(j)}(x)}{j!}(y-x)^j
\end{eqnarray*}
where we made use of Equation (\ref{eqtaylor1:4}) in the last step.
\end{proof}

Replacing $m$ by $1$ in Definition \ref{defWLUDnm}, then the $m\times n$ matrix $\boldsymbol{D}\boldsymbol{f}(\boldsymbol{x})$ is replaced by the gradient of $\boldsymbol{f}$ at $\boldsymbol{x}$: $\boldsymbol{\nabla}f(\boldsymbol{x})$, and we readily obtain the definition of a WLUD $\mathcal{N}$-valued function at a point $\boldsymbol{x_0}$ or on an open subset $A$ of $\mathcal{N}^n$.

\begin{defn}\label{defWLUDn1}
Let $A\subset\mathcal{N}^n$ be open, let $f:A \to \mathcal{N}$, and let $\boldsymbol{x_0}\in A$ be given.  Then we say that $f$ is WLUD at $\boldsymbol{x_0}$ if $f$ is differentiable in a neighborhood
$\Omega$ of $\boldsymbol{x_0}$ in $A$ and if for every $\epsilon>0$ in $\mathcal{N}$ there exists $\delta>0$ in $\mathcal{N}$ such that $B_{\delta}(\boldsymbol{x_0})\subset\Omega$, and
for all $\boldsymbol{x},\boldsymbol{y}\in B_{\delta}(\boldsymbol{x_0})$ we have that
\[
\left|f(\boldsymbol{y}) - f(\boldsymbol{x}) -
\boldsymbol{\nabla}f(\boldsymbol{x})\cdot(\boldsymbol{y} - \boldsymbol{x})\right| \le \epsilon \vert \boldsymbol{y} - \boldsymbol{x} \vert.
\]
Moreover, we say that $f$ is WLUD on $A$ if $f$ is WLUD at every point in $A$.
\end{defn}

Using Defintion \ref{def:wludn} and Definition \ref{defWLUDn1}, the natural way to define $k$ times weak local uniform differentiability (WLUD$^k$) at a point $\boldsymbol{x_0}$ or on an open subset $A$ of $\mathcal{N}^n$ is as follows.
\begin{defn}\label{defWLUDn1k}
Let $A\subset\mathcal{N}^n$ be open, let $f:A \to \mathcal{N}$, and let $\boldsymbol{x_0}\in A$ be given.  Then we say that $f$ is WLUD$^k$ at $\boldsymbol{x_0}$ if $f$ is $k$-times differentiable in a neighborhood
$\Omega$ of $\boldsymbol{x_0}$ in $A$ and if for every $\epsilon>0$ in $\mathcal{N}$ there exists $\delta>0$ in $\mathcal{N}$ such that $B_{\delta}(\boldsymbol{x_0})\subset\Omega$, and
for all $\boldsymbol{\xi},\boldsymbol{\eta}\in B_{\delta}(\boldsymbol{x_0})$ we have that
\[
\left|f(\boldsymbol{\eta})-f(\boldsymbol{\xi})-\sum_{j=1}^{k} \frac{1}{j!}\left[(\boldsymbol{\eta}-\boldsymbol{\xi})\cdot\nabla\right]^{j}f(\boldsymbol{\xi})\right| \le \epsilon \vert \boldsymbol{\eta} - \boldsymbol{\xi} \vert^k,
\]
where
\begin{eqnarray*}
\left[(\boldsymbol{\eta}-\boldsymbol{\xi})\cdot\nabla\right]^{j}f(\boldsymbol{\xi})&=&
\left.\left[(\eta_1-\xi_1)\frac{\partial}{\partial x_1}+\cdots+(\eta_n-\xi_n)\frac{\partial}{\partial x_n}\right]^{j}f(\boldsymbol{x})
\right|_{\boldsymbol{x}=\boldsymbol{\xi}}\\
&=&\sum_{l_{1},\ldots,l_{j}=1}^{n} \left(
\left.\frac{\partial^j f(\boldsymbol{x})}{\partial_{x_{l_{1}}}\cdots\partial_{x_{l_{j}}}}\right|_{\boldsymbol{x}=\boldsymbol{\xi}}
\prod_{m=1}^{j}\left(  \eta_{l_{m}}-\xi_{l_{m}}\right)  \right).
\end{eqnarray*}
Moreover, we say that $f$ is WLUD$^k$ on $A$ if $f$ is WLUD$^k$ at every point in $A$.
\end{defn}

\begin{defn}\label{defWLUDn1infty}
Let $A\subset\mathcal{N}^n$ be open, let $f:A \to \mathcal{N}$, and let $\boldsymbol{x_0}\in A$ be given.  Then we say that $f$ is WLUD$^\infty$ at $\boldsymbol{x_0}$ if $f$ is WLUD$^k$ at $\boldsymbol{x_0}$ for every
$k\in\mathbb{N}$. Moreover, we say that $f$ is WLUD$^\infty$  on $A$ if $f$ is WLUD$^\infty$ at every point in $A$.
\end{defn}

Now we are ready to state and prove the analog of Theorem \ref{thmtaylorseries1} for functions of $n$ variables.
\begin{theorem}\label{thmtaylorseriesn}
Let $A \subseteq \mathcal{N}^n$ be open, let $\boldsymbol{x_0}\in A$, and let $f : A \rightarrow \mathcal{N}$ be WLUD$^{\infty}$ at $\boldsymbol{x_0}$. For each $k\in\mathbb{N}$, let $\delta_k>0$ in $\mathcal{N}$ correspond to $\epsilon=1$ in Definition \ref{defWLUDn1k}. Assume that
\begin{eqnarray*}
\limsup_{{\tiny\begin{array}{l}j\rightarrow\infty\\l_1=1,\ldots,n\\
\vdots\\
l_j=1,\ldots,n
\end{array}}}\left(\frac{-\lambda\left(\left.\frac{\partial^j f(\boldsymbol{x})}{\partial_{x_{l_{1}}}\cdots\partial_{x_{l_{j}}}}\right|_{\boldsymbol{x}=\boldsymbol{x_0}}  \right)}{j}\right)&<&\infty\\
&\mbox{ }&\\
\text{ and }
\limsup _{k\rightarrow\infty}\lambda\left(\delta_k\right)&<&\infty.
\end{eqnarray*}
Then there exists a neighborhood $U$ of $\boldsymbol{x_0}$ in $A$ such that,  for any $\boldsymbol{\eta}\in U$, we have that
\[
f(\boldsymbol{\eta})=f(\boldsymbol{x_0})+\sum_{j=1}^{\infty} \frac{1}{j!}\left[(\boldsymbol{\eta}-\boldsymbol{x_0})\cdot\nabla\right]^{j}f(\boldsymbol{x_0}).
\]

\end{theorem}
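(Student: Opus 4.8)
The plan is to mirror the first half of the proof of Theorem \ref{thmtaylorseries1} — the part that establishes the expansion about the fixed centre. Since the conclusion here asserts convergence only at $\boldsymbol{x_0}$ (and not, as in the one-variable statement, at an arbitrary recentred point), neither the term-by-term differentiation nor the binomial recentring of that proof is needed, so the argument is genuinely shorter. First I would set
\[
\lambda_0=\limsup_{\substack{j\to\infty\\ l_1,\ldots,l_j=1,\ldots,n}}\left(\frac{-\lambda\!\left(\left.\frac{\partial^j f(\boldsymbol{x})}{\partial_{x_{l_{1}}}\cdots\partial_{x_{l_{j}}}}\right|_{\boldsymbol{x}=\boldsymbol{x_0}}\right)}{j}\right),
\]
which the first hypothesis guarantees is a finite real number.

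The first — and I expect the only substantial — step is to prove that the series $f(\boldsymbol{x_0})+\sum_{j=1}^\infty \frac{1}{j!}\left[(\boldsymbol{\eta}-\boldsymbol{x_0})\cdot\nabla\right]^j f(\boldsymbol{x_0})$ converges in $\mathcal{N}$ whenever $\lambda(\eta_i-(x_0)_i)>\lambda_0$ for every $i$. Denoting the $j$-th term by $c_j$ and writing $\partial_{l_1\cdots l_j}f(\boldsymbol{x_0})$ for the mixed partial derivative appearing in Definition \ref{defWLUDn1k}, I would use $\lambda(1/j!)=0$ together with the ultrametric estimate $\lambda(\text{sum})\ge\min\lambda(\text{summand})$ and the identity $\lambda(\text{product})=\sum\lambda$ to bound
\[
\lambda(c_j)\ge\min_{l_1,\ldots,l_j}\left[\lambda\!\left(\partial_{l_1\cdots l_j}f(\boldsymbol{x_0})\right)+\sum_{m=1}^j\lambda\!\left(\eta_{l_m}-(x_0)_{l_m}\right)\right].
\]
Setting $\mu=\min_i\lambda(\eta_i-(x_0)_i)>\lambda_0$ and choosing $\epsilon>0$ with $\mu-\lambda_0-\epsilon>0$, the definition of $\lambda_0$ as a limsup over $j$ and all multi-indices supplies a $J$ beyond which $\lambda(\partial_{l_1\cdots l_j}f(\boldsymbol{x_0}))>-j(\lambda_0+\epsilon)$ uniformly in $(l_1,\ldots,l_j)$ (vanishing derivatives give $\lambda=\infty$ and only improve the bound), whence $\lambda(c_j)\ge j(\mu-\lambda_0-\epsilon)\to\infty$. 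Because $\mathcal{N}$ is Cauchy complete in the order topology, a series converges as soon as $\lambda$ of its general term tends to $\infty$, so the series converges. The one place where the several-variable structure could interfere is the inner multi-index sum, but it has exactly $n^j$ summands, a positive integer of order $0$, so it does not disturb the valuation estimate; this is the analogue of the one-variable citation to \cite[Page 59]{schikhofbook}.

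With convergence secured, the remainder follows the one-variable template. Using the second hypothesis I would pick $t>\limsup_k\lambda(\delta_k)$ and $N$ with $\lambda(\delta_k)<t$ for all $k>N$, then choose $\delta>0$ with $\lambda(\delta)>\max\{\lambda_0,t,0\}$, so that $0<\delta\ll\delta_k$ and hence $B_\delta(\boldsymbol{x_0})\subseteq B_{\delta_k}(\boldsymbol{x_0})$ for every $k>N$. Specializing Definition \ref{defWLUDn1k} to $\boldsymbol{\xi}=\boldsymbol{x_0}$ with $\epsilon=1$ then gives
\[
\left|f(\boldsymbol{\eta})-f(\boldsymbol{x_0})-\sum_{j=1}^k\frac{1}{j!}\left[(\boldsymbol{\eta}-\boldsymbol{x_0})\cdot\nabla\right]^j f(\boldsymbol{x_0})\right|\le\left|\boldsymbol{\eta}-\boldsymbol{x_0}\right|^k
\]
for every $\boldsymbol{\eta}\in U:=B_\delta(\boldsymbol{x_0})$ and every $k>N$. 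For such $\boldsymbol{\eta}$ one has $\lambda(\eta_i-(x_0)_i)\ge\lambda(\delta)>\max\{\lambda_0,0\}$, so the series converges by the first step while simultaneously $\left|\boldsymbol{\eta}-\boldsymbol{x_0}\right|^k\to 0$ since $\lambda(|\boldsymbol{\eta}-\boldsymbol{x_0}|)>0$. Letting $k\to\infty$ in the displayed inequality squeezes the left-hand side to $0$, yielding the asserted identity $f(\boldsymbol{\eta})=f(\boldsymbol{x_0})+\sum_{j=1}^\infty\frac{1}{j!}\left[(\boldsymbol{\eta}-\boldsymbol{x_0})\cdot\nabla\right]^j f(\boldsymbol{x_0})$ for all $\boldsymbol{\eta}\in U$, which completes the proof.
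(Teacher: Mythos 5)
Your proposal is correct and follows essentially the same route as the paper's own proof: the same choice of $\delta$ with $\lambda(\delta)>\max\{\lambda_0,t,0\}$ via the two hypotheses, the same specialization of Definition \ref{defWLUDn1k} to $\boldsymbol{\xi}=\boldsymbol{x_0}$ with $\epsilon=1$, the same valuation-theoretic estimate (your $\mu$, $\epsilon$, $J$ playing the roles of the paper's $\lambda(\vert\boldsymbol{\eta}-\boldsymbol{x_0}\vert)$, $q$, $M$) to show the terms' valuations grow linearly in $j$ so the series converges by completeness, and the same squeeze argument as $k\to\infty$. You are also right that the term-by-term differentiation and binomial recentring from the one-variable proof are not needed here, which is exactly how the paper's proof proceeds; the only (immaterial) difference is that you establish convergence before constructing $U$ rather than after.
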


\begin{proof}
Let
\[
\lambda_0=\limsup_{{\tiny\begin{array}{l}j\rightarrow\infty\\l_1=1,\ldots,n\\
\vdots\\
l_j=1,\ldots,n
\end{array}}}\left(\frac{-\lambda\left(\left.\frac{\partial^j f(\boldsymbol{x})}{\partial_{x_{l_{1}}}\cdots\partial_{x_{l_{j}}}}\right|_{\boldsymbol{x}=\boldsymbol{x_0}}  \right)}{j}\right).
\]
Then $\lambda_0\in\mathbb{R}$ and $\lambda_0<\infty$.

For all $k\in \mathbb{N}$, we have that $B_{\delta_k}(\boldsymbol{x_0})\subset A$, $f$ is $k$ times differentiable on $B_{\delta_k}(\boldsymbol{x_0})$, and
 \[
 \left|f(\boldsymbol{\eta})-f(\boldsymbol{x_0})-\sum_{j=1}^{k} \frac{1}{j!}\left[(\boldsymbol{\eta}-\boldsymbol{x_0})\cdot\nabla\right]^{j}f(\boldsymbol{x_0})\right| \le  \vert \boldsymbol{\eta} - \boldsymbol{x_0} \vert^k \text{ for all }\boldsymbol{\eta}\in B_{\delta_k}(\boldsymbol{x_0}).
\]
 Since $\limsup\limits _{k\rightarrow\infty}\lambda\left(\delta_k\right)<\infty$, there exists $t>0$ in $\mathbb{Q}$ such that $\limsup\limits _{k\rightarrow\infty}\lambda\left(\delta_k\right)<t<\infty$. Thus, there exists $N\in\mathbb{N}$ such that
 \begin{equation}\label{eqtaylorn:1}
  \lambda(\delta_k)<t\text{ for all }k>N.
  \end{equation}
  Let
  $\delta>0$ in $\mathcal{N}$ be such that $\lambda(\delta)>\max\{\lambda_0, t,0\}$. It follows from (\ref{eqtaylorn:1})
   that $\lambda(\delta)>\lambda(\delta_k)$ and hence $0<\delta\ll\delta_k$ for all $k>N$. Thus,
  $B_{\delta}(\boldsymbol{x_0})\subset A$, $f$ is infinitely often differentiable on $B_{\delta}(\boldsymbol{x_0})$, and
 \begin{equation}\label{eqtaylorn:2}
 \left|f(\boldsymbol{\eta})-f(\boldsymbol{x_0})-\sum_{j=1}^{k} \frac{1}{j!}\left[(\boldsymbol{\eta}-\boldsymbol{x_0})\cdot\nabla\right]^{j}f(\boldsymbol{x_0})\right| \le  \vert \boldsymbol{\eta} - \boldsymbol{x_0} \vert^k \ \forall \boldsymbol{\eta}\in B_{\delta}(\boldsymbol{x_0})\text{ and }\forall k>N.
\end{equation}

 Let $U=B_{\delta}(\boldsymbol{x_0})$; and let
$\boldsymbol{\eta}\in U$ be given.
Then we have that $\lambda(\vert \boldsymbol{\eta} - \boldsymbol{x_0} \vert)\ge \lambda(\delta)>\lambda_0$. We will show first that $\sum_{j=1}^{\infty} \frac{1}{j!}\left[(\boldsymbol{\eta}-\boldsymbol{x_0})\cdot\nabla\right]^{j}f(\boldsymbol{x_0})$ converges in $\mathcal{N}$. Since
$\lambda(\vert \boldsymbol{\eta} - \boldsymbol{x_0} \vert)>\lambda_0$, there exists $q>0$ in $\mathbb{Q}$ such that $\lambda(\vert \boldsymbol{\eta} - \boldsymbol{x_0} \vert)-q>\lambda_0$. Hence there exists $M\in\mathbb{N}$ such that
\[
\lambda(\vert \boldsymbol{\eta} - \boldsymbol{x_0} \vert)-q> \frac{-\lambda\left(\left.\frac{\partial^j f(\boldsymbol{x})}{\partial_{x_{l_{1}}}\cdots\partial_{x_{l_{j}}}}\right|_{\boldsymbol{x}=\boldsymbol{x_0}} \right)}{j}
\]
for all $j>M$ and for $l_1=1, \ldots, n$, $l_2=1, \ldots, n$, \ldots, $l_j=1, \ldots, n$. It follows that
\begin{eqnarray*}
\lambda\left(
\left.\frac{\partial^j f(\boldsymbol{x})}{\partial_{x_{l_{1}}}\cdots\partial_{x_{l_{j}}}}\right|_{\boldsymbol{x}=\boldsymbol{x_0}}
\prod_{m=1}^{j}\left(  \eta_{l_{m}}-x_{0,l_{m}}\right)  \right)
&\ge&\lambda \left(
\left.\frac{\partial^j f(\boldsymbol{x})}{\partial_{x_{l_{1}}}\cdots\partial_{x_{l_{j}}}}\right|_{\boldsymbol{x}=\boldsymbol{x_0}}
\vert \boldsymbol{\eta} - \boldsymbol{x_0} \vert^j\right)\\
&=&\lambda \left(
\left.\frac{\partial^j f(\boldsymbol{x})}{\partial_{x_{l_{1}}}\cdots\partial_{x_{l_{j}}}}\right|_{\boldsymbol{x}=\boldsymbol{x_0}}\right)+j\lambda\left(
\vert \boldsymbol{\eta} - \boldsymbol{x_0} \vert\right)\\
&>&jq
\end{eqnarray*}
for all $j>M$ and for $l_1=1, \ldots, n$, $l_2=1, \ldots, n$, \ldots, $l_j=1, \ldots, n$. Thus,
\begin{eqnarray*}
\lambda\left(\left[(\boldsymbol{\eta}-\boldsymbol{x_0})\cdot\nabla\right]^{j}f(\boldsymbol{x_0})\right)&=&
\lambda\left(\sum_{l_{1},\ldots,l_{j}=1}^{n} \left(
\left.\frac{\partial^j f(\boldsymbol{x})}{\partial_{x_{l_{1}}}\cdots\partial_{x_{l_{j}}}}\right|_{\boldsymbol{x}=\boldsymbol{x_0}}
\prod_{m=1}^{j}\left(  \eta_{l_{m}}-x_{0,l_{m}}\right)  \right)\right)\\
&>&jq
\end{eqnarray*}
for all $j>M$; and hence
\begin{eqnarray*}
\lim_{j\rightarrow\infty}\lambda\left(\frac1{j!}\left[(\boldsymbol{\eta}-\boldsymbol{x_0})\cdot\nabla\right]^{j}f(\boldsymbol{x_0})\right)&=&
\lim_{j\rightarrow\infty}\lambda\left(\left[(\boldsymbol{\eta}-\boldsymbol{x_0})\cdot\nabla\right]^{j}f(\boldsymbol{x_0})\right)\\
&\ge&q\lim_{j\rightarrow\infty}j=\infty.
\end{eqnarray*}
Thus,
\[
\lim_{j\rightarrow\infty}\left(\frac1{j!}\left[(\boldsymbol{\eta}-\boldsymbol{x_0})\cdot\nabla\right]^{j}f(\boldsymbol{x_0})\right)=0
\]
and hence $\sum_{j=1}^{\infty} \frac{1}{j!}\left[(\boldsymbol{\eta}-\boldsymbol{x_0})\cdot\nabla\right]^{j}f(\boldsymbol{x_0})$ converges in $\mathcal{N}$; that is,
\[
\lim\limits_{k\rightarrow\infty}\sum\limits_{j=1}^{k} \frac{1}{j!}\left[(\boldsymbol{\eta}-\boldsymbol{x_0})\cdot\nabla\right]^{j}f(\boldsymbol{x_0})\text{ exists in }\mathcal{N}.
\]

Taking the limit in (\ref{eqtaylorn:2}) as $k\rightarrow\infty$, we get:
\[
0\le\lim_{k\rightarrow\infty} \left|f(\boldsymbol{\eta})-f(\boldsymbol{x_0})-\sum_{j=1}^{k} \frac{1}{j!}\left[(\boldsymbol{\eta}-\boldsymbol{x_0})\cdot\nabla\right]^{j}f(\boldsymbol{x_0})\right|\le \lim_{k\rightarrow\infty} \vert \boldsymbol{\eta} - \boldsymbol{x_0} \vert^k,
\]
from which we obtain
\[
0\le \left|f(\boldsymbol{\eta})-f(\boldsymbol{x_0})-\lim_{k\rightarrow\infty}\sum_{j=1}^{k} \frac{1}{j!}\left[(\boldsymbol{\eta}-\boldsymbol{x_0})\cdot\nabla\right]^{j}f(\boldsymbol{x_0})\right|\le \lim_{k\rightarrow\infty} \vert \boldsymbol{\eta} - \boldsymbol{x_0} \vert^k.
\]
Since $\lambda(\vert\boldsymbol{\eta} - \boldsymbol{x_0}\vert)\ge \lambda(\delta)>0$, we obtain that
$\lim\limits_{k\rightarrow\infty} \left|\boldsymbol{\eta} - \boldsymbol{x_0}\right|^k=0$.
It follows that
\[
0\le\left|f(\boldsymbol{\eta})-f(\boldsymbol{x_0})-\sum_{j=1}^{\infty} \frac{1}{j!}\left[(\boldsymbol{\eta}-\boldsymbol{x_0})\cdot\nabla\right]^{j}f(\boldsymbol{x_0})\right|\le 0
\]
from which we infer that
\[
f(\boldsymbol{\eta})=f(\boldsymbol{x_0})+\sum_{j=1}^{\infty} \frac{1}{j!}\left[(\boldsymbol{\eta}-\boldsymbol{x_0})\cdot\nabla\right]^{j}f(\boldsymbol{x_0}).
\]
\end{proof}


\end{document}